\documentclass[11pt,reqno]{amsart}
\usepackage{graphicx}
\usepackage[utf8]{inputenc}
\usepackage{amsmath}
\usepackage{amsfonts}
\usepackage{amssymb}

\numberwithin{equation}{section}
\theoremstyle{plain}
\newtheorem{thm}{Theorem}[section]
\newtheorem{proposition}{Proposition}[section]
\newtheorem{corollary}{Corollary}[section]
\newtheorem*{lemma}{Lemma}

\theoremstyle{definition}
\newtheorem{definition}{Definition}
\theoremstyle{remark}
\newtheorem*{notation}{Notation}

\begin{document}
\title{On the mod-Gaussian convergence of a sum over primes}
\author{Martin Wahl}
\address{INSTITUT F\"{U}R MATHEMATIK, UNIVERSIT\"{A}T Z\"{U}RICH\\ WINTERTHURERSTRASSE 190, 8057 Z\"{U}RICH, SWITZERLAND}
\email{mwahl1983@yahoo.de}
\subjclass[2010]{11N05, 11M06, 60F05, 60F10}
\keywords{Distribution of primes, mod-Gaussian convergence, Riemann zeta-function, Selberg's central limit theorem, large deviations}

\begin{abstract}
We prove mod-Gaussian convergence for a Dirichlet polynomial which approximates \(\operatorname{Im}\log\zeta(1/2+it)\). This Dirichlet polynomial is sufficiently long to deduce Selberg's central limit theorem with an explicit error term. Moreover, assuming the Riemann hypothesis, we apply the theory of the Riemann zeta-function to extend this mod-Gaussian convergence to the complex plane. From this we obtain that $\operatorname{Im}\log\zeta(1/2+it)$ satisfies a large deviation principle on the critical line. Results about the moments of the Riemann zeta-function follow. 
\end{abstract}
\maketitle
\section{Introduction} 
In this paper we study the distribution of values taken by $\log\zeta(1/2+it)$. A breakthrough was achieved by Selberg who showed that as \(t\) varies in \([T,2T]\), the distribution of \((\operatorname{Re}\log\zeta(1/2+it),\operatorname{Im}\log\zeta(1/2+it))\) is approximately Gaussian, with independent components each having expectation \(0\) and variance \((\log\log T)/2\). More precisely, he proved a central limit theorem which,  by the L\'{e}vy continuity theorem, is equivalent to the statement that
\begin{equation}\label{eq:i1}
  \frac{1}{T}\int_T^{2T} e^{iu\frac{\operatorname{Re}\log\zeta(1/2+it)}
  {\sqrt{(\log\log  T)/2}}
  +iv\frac{\operatorname{Im}\log\zeta(1/2+it)}{\sqrt{(\log\log T)/2}}}dt
  \rightarrow e^{-u^2/2-v^2/2},
\end{equation}
as $T\rightarrow\infty$, for all real numbers $u$ and $v$. For the case of $\operatorname{Im}\log\zeta(1/2+it)$ see \cite{S}, \cite{S2}, and also the work of Ghosh \cite{GO2}. The general case is investigated for instance in the book of Joyner \cite{J}. Some of Selberg's more recent results, for example about the rate of convergence, can be found in \cite{S3} and the thesis of Tsang \cite{T}. Initially, Selberg obtained the asymptotics of the joint moments which
lead to~\eqref{eq:i1} by the method of moments. A more effective approach, applied in our analysis, too, is treated in the work of Bombieri and Hejhal \cite{BH}. A central limit theorem for the sum over primes \((1/\sqrt{(\log\log x)/2})\sum_{p\leq x}p^{-1/2-iU_T}\), $U_T$ being random variables uniformly distributed on $[T,2T]$, \(\log x=\log T/(\log\log T)^{1/4}\), follows from the mean value theorem of Montgomery and Vaughan and the method of moments. To complete the proof (see \cite[Lemma 3 and Corallary]{BH}), they showed that the $L^1$-norm of $\log\zeta(1/2+iU_T)-\sum_{p\leq x}p^{-1/2-iU_T}$ is sufficiently small. 

The convergence in~\eqref{eq:i1} is also a consequence of a conjecture on the behaviour of the moments of the Riemann zeta-function on the critical line (see, e.g., the work of Keating and Snaith \cite{KS} and the references therein). It asserts that
\begin{multline}\label{eq:ksmc}
  e^{(z_1^2+z_2^2)(\log\log T)/4}\frac{1}{T}\int_T^{2T} e^{iz_1\operatorname{Re}\log\zeta(1/2+it)
 +iz_2\operatorname{Im}\log\zeta(1/2+it)}dt\\\rightarrow \Phi_g(z_1,z_2)\Phi_a(z_1,z_2)\ \ \ as\ T\rightarrow \infty
\end{multline}
locally uniformly for \(z_1,z_2\in\mathbb{C}\) with \(\operatorname{Re}(iz_1)>-1\) and analytic functions $\Phi_g$, $\Phi_a$ (see \cite[Conjecture 9]{KN} and also \cite[Conjecture 1]{GHK}). This type of convergence was introduced in \cite{JKN} where it is called mod-Gaussian convergence. 

A precise form of the function \(\Phi_g\) was conjectured by Keating and Snaith and is based on calculations in the theory of random matrices (see \cite{KS}, \cite[formula (18)]{KN}). The arithmetic factor \(\Phi_a\) can be explained, e.g., by computing the characteristic function of $\sum_{n\leq x}\Lambda(n)/(n^{1/2+iU_T}\log n)$ (see \cite[Theorem 2]{GHK}, where $x$ has to be $O((\log T)^{2-\epsilon})$) or of the corresponding stochastic model (replace \(\left\{p^{iU_T}\right\}_{p\in\mathbb{P}}\) by an independent sequence of random variables uniformly distributed on the unit circle, see \cite[Example 4]{KN}). %Note that $\Lambda(n)$ denotes as usual the von Mangoldt function.
 
In this paper we further investigate the distribution of the sum over primes \(\sum_{p\leq x}p^{-1/2-it}\) as \(t\) varies in \([T,2T]\) and its consequences on the distribution of values of the Riemann zeta-function on the critical line. Here, we will restrict ourselves to the case of $\operatorname{Im}\log\zeta(1/2+it)$. Note that some of the arguments cannot be applied to the case of $\operatorname{Re}\log\zeta(1/2+it)$.
It is our first aim to establish mod-Gaussian convergence if \(x\) fulfills certain conditions. Precisely, in Section~\ref{mcsp} we prove the following:

\begin{thm}\label{thma} Let $x=e^{\log T/N}$ and $N$ such that $x\rightarrow\infty$ and $N/\log\log T\rightarrow\infty$ as $T\rightarrow\infty$. Then
\begin{equation}\label{eq:i3}
  e^{u^2(\log\log x+\gamma)/4}\frac{1}{T}\int_{T}^{2T} e^{iu\sum_{p\leq x}
  \frac{\sin (t\log p)}{\sqrt{p}}}dt\rightarrow \Phi(u)\ \ \ as\ T\rightarrow\infty
\end{equation}
locally uniformly for $u\in\mathbb{R}$. Here, \(\gamma\) denotes Euler's constant and \(\Phi\) is the analytic function given by
\begin{equation}\label{eq:lf}
  \Phi(u)=\prod_{p\in\mathbb{P}}\Big(1-\frac{1}{p}\Big)^{-u^2/4}
  J_0\Big(\frac{u}{\sqrt{p}}\Big),
\end{equation}
where \(J_0\) denotes the zeroth Bessel function (see, e.g., Section~\ref{bessel}).
\end{thm}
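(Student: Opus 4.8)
The plan is to compute the characteristic function on the left of~\eqref{eq:i3} directly, exploiting the fact that $\sum_{p\le x}\sin(t\log p)/\sqrt p$ is a short Dirichlet polynomial whose exponential moments can be evaluated by the Montgomery--Vaughan mean value theorem. Write $P(t)=\sum_{p\le x}\sin(t\log p)/\sqrt p$. Expanding $e^{iuP(t)}=\prod_{p\le x}e^{iu\sin(t\log p)/\sqrt p}$ and using the Jacobi--Anger expansion $e^{iz\sin\theta}=\sum_{k\in\mathbb Z}J_k(z)e^{ik\theta}$ for each prime, one obtains
\begin{equation}\label{eq:pp1}
 \frac1T\int_T^{2T} e^{iuP(t)}\,dt
 =\sum_{(k_p)}\Big(\prod_{p\le x}J_{k_p}\!\big(u/\sqrt p\big)\Big)
 \cdot\frac1T\int_T^{2T}\prod_{p\le x}p^{ik_p t}\,dt,
\end{equation}
where the outer sum runs over finitely supported integer vectors $(k_p)_{p\le x}$. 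The integral $\frac1T\int_T^{2T}\prod_p p^{ik_pt}\,dt$ equals $1$ when $\prod_p p^{k_p}=1$, i.e. when all $k_p=0$ (by unique factorization), and is $O(1/(T\log(\prod p^{k_p})))$ otherwise. The diagonal term is exactly $\prod_{p\le x}J_0(u/\sqrt p)$, which already produces the Bessel product in~\eqref{eq:lf}. So the scheme is: (i) show the off-diagonal contribution is negligible; (ii) show $\prod_{p\le x}J_0(u/\sqrt p)$, after multiplication by $e^{u^2(\log\log x+\gamma)/4}$, converges to $\Phi(u)$.

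For step (i), the naive bound on the off-diagonal is not summable because the Bessel coefficients decay only like $J_{k}(z)\sim (z/2)^{|k|}/|k|!$ and there are infinitely many vectors $(k_p)$; one must instead truncate. I would split the range of summation: terms with $\sum_p|k_p|\log p\le \log T$ (say) are controlled by noting $\prod p^{k_p}$ is an integer that is either $1$ or has logarithm bounded below, while the trivial bound $\big|\frac1T\int\cdots\big|\le 1$ combined with $\sum_p |J_{k_p}(u/\sqrt p)|$ over large $|k_p|$ being tiny handles the rest. Concretely, for a fixed prime $p$, $\sum_{k\ne 0}|J_k(u/\sqrt p)|\ll u^2/p$ uniformly for $u$ in a compact set and $p$ large, so that $\sum_{(k_p)\ne 0}\prod_p|J_{k_p}(u/\sqrt p)|\ll \prod_p(1+O(u^2/p)) - 1$, but this diverges; the resolution is that for $(k_p)\ne 0$ one always pays a factor $1/(T\log 2)$ from at least one nontrivial integral, unless $\prod p^{k_p}$ is very close to $1$ in multiplicative terms, which cannot happen for integers. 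A cleaner route: bound the number of $(k_p)$ with $\prod p^{|k_p|}\le y$ by $y^{1+\epsilon}$-type divisor estimates, choose $y=T^{1/2}$, use the decay of the Bessel coefficients to make the tail $\prod p^{|k_p|}>y$ contribute $o(1)$ (here the hypothesis $N/\log\log T\to\infty$, equivalently $x$ small relative to $T$, is what guarantees enough room), and on $\prod p^{|k_p|}\le y$ use $|\frac1T\int\cdots|\ll 1/T$ times the number of such vectors times $\max|\prod J_{k_p}|$, which is $\ll y^{1+\epsilon}/T=o(1)$.

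For step (ii), this is pure analysis of the Bessel product. Using the asymptotic $J_0(z)=1-z^2/4+O(z^4)$, we get $\log J_0(u/\sqrt p)=-u^2/(4p)+O(1/p^2)$ for $p$ large, while $\log(1-1/p)^{-u^2/4}=u^2/(4p)+O(1/p^2)$. Hence $\log\Big[(1-1/p)^{-u^2/4}J_0(u/\sqrt p)\Big]=O(1/p^2)$, so the infinite product defining $\Phi(u)$ converges absolutely and locally uniformly in $u$, giving an entire (in particular analytic) function. Then
\begin{equation}\label{eq:pp2}
 e^{u^2(\log\log x+\gamma)/4}\prod_{p\le x}J_0\!\big(u/\sqrt p\big)
 = \prod_{p\le x}\Big[(1-1/p)^{-u^2/4}J_0(u/\sqrt p)\Big]
 \cdot e^{u^2(\log\log x+\gamma)/4}\prod_{p\le x}(1-1/p)^{u^2/4},
\end{equation}
and by Mertens' theorem $\prod_{p\le x}(1-1/p)=e^{-\gamma}/\log x\cdot(1+o(1))$, so the second factor tends to $1$, while the first tends to $\Phi(u)$ as $x\to\infty$. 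Combining with step (i) yields~\eqref{eq:i3}. The main obstacle is step (i): making the off-diagonal estimate genuinely uniform in $u$ on compacta while keeping $x$ as large as the hypotheses allow, which is precisely where the condition $N/\log\log T\to\infty$ enters and must be used carefully.
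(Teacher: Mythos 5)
Your step (ii) is sound and is essentially the paper's Proposition~\ref{prop2} (Mertens' formula plus normal convergence of the product defining $\Phi$), and your reduction of the problem to showing that the integral equals the diagonal term $\prod_{p\leq x}J_0(u/\sqrt{p})$ up to $o(e^{-u^2(\log\log x)/4})$ is the correct target — that is exactly what the paper's Proposition~\ref{propa} provides. The gap is in step (i), and it is not a removable technicality. After the Jacobi--Anger expansion, the total $\ell^1$-mass of the off-diagonal Bessel coefficients is
\[
\sum_{(k_p)\neq 0}\prod_{p\leq x}\bigl|J_{k_p}(u/\sqrt{p})\bigr|\;=\;\prod_{p\leq x}\Bigl(1+\frac{|u|}{\sqrt{p}}+O(1/p)\Bigr)-1\;=\;e^{(2|u|+o(1))\sqrt{x}/\log x},
\]
since $|J_{\pm1}(u/\sqrt{p})|\sim |u|/(2\sqrt{p})$; your claim that $\sum_{k\neq0}|J_k(u/\sqrt{p})|\ll u^2/p$ is off by a square root. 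Almost all of this mass comes from vectors with $k_p\in\{0,\pm1\}$ supported on about $2|u|\sqrt{x}/\log x$ primes; for these, $n=\prod_p p^{k_p}=a/b$ has $\log(ab)$ as large as $x$, so they all lie in your tail $\prod p^{|k_p|}>y$, the factorial decay of $J_k$ in $k$ never activates, and the oscillatory integral gives nothing beyond the trivial bound $1$ (note also that for $n=a/b\neq1$ the integral is only $\ll\min(a,b)/T$, not $\ll 1/T$). Any absolute-value treatment of the off-diagonal therefore forces $e^{c\sqrt{x}/\log x}\leq T^{O(1)}$, i.e.\ $x\ll(\log T)^{2+o(1)}$ — precisely the classical range that the paper notes is already attainable by the techniques of \cite[Theorem 5.1]{L}. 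The hypothesis $N/\log\log T\to\infty$ does not help here: it only caps $x$ at roughly $e^{\log T/\log\log T}$, for which $e^{\sqrt{x}/\log x}$ dwarfs every power of $T$.

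The paper gets past this wall by \emph{not} fully expanding the exponential: it Taylor-expands $e^{iuP(t)}$ only to order $2\lfloor N\rfloor\approx 2\log T/\log x$, evaluates each moment by the Montgomery--Vaughan theorem and matches it with the corresponding moment of the i.i.d.\ model $\sum_j\operatorname{Im}X_j/\sqrt{p_j}$ (whose characteristic function is the Bessel product), and controls the Taylor remainder via the bound $((2k)!/2^{2k}k!)(\sum_{p\leq x}1/p)^k$ on the $2k$-th moment; the truncation error is $\approx(e\log\log x/(4N))^{N}$, which is where $N/\log\log T\to\infty$ is genuinely used, while the accumulated mean-value errors are $O((c^2\pi(x))^N/T)$. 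To salvage your route you would have to impose an analogous truncation $\sum_p|k_p|\leq 2N$ inside the Bessel expansion, at which point you are effectively redoing the paper's moment computation.
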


One interesting point of the result seems to be the size of $x$. It can be chosen large enough to obtain Selberg's central limit theorem with Selberg's explicit error term (see \cite[Theorem 2]{S3} and Appendix A). Moreover, we obtain the following improvement of~\eqref{eq:i1}: 

\begin{corollary}\label{cor1} Assume RH. For $T$ sufficiently large, we have
\begin{equation*}
  \frac{1}{T}\int_T^{2T} e^{iv\frac{\operatorname{Im}\log\zeta(1/2+it)}{\sqrt{(\log\log T)/2}}}dt=e^{-v^2/2}
  +v^2O\Big(\frac{\log\log\log T}{\log\log T}\Big)+O(1/\log T)
\end{equation*}
uniformly for \(|v|\leq\sqrt{\log\log T/\log\log\log T}\).
\end{corollary}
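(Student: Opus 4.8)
The plan is to combine Theorem~\ref{thma}, in the effective form its proof yields, with the approximation of $\operatorname{Im}\log\zeta(1/2+it)$ by a Dirichlet polynomial over primes that is available on RH (Selberg's explicit error term; see Appendix~A and \cite{S3}), and then to Taylor‑expand. Write $\sigma_T:=\sqrt{(\log\log T)/2}$ and $P_x(t):=\sum_{p\le x}\tfrac{\sin(t\log p)}{\sqrt p}$. I would fix $N:=(\log\log T)^2$ and $x:=e^{\log T/N}$, so that $x\to\infty$, $N/\log\log T\to\infty$ and Theorem~\ref{thma} applies, and $\log\log x=\log\log T-2\log\log\log T$. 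The proof of Theorem~\ref{thma} produces the Euler product with an explicit remainder: truncating the Taylor series of $e^{iuP_x}$ at order $K\asymp N$, estimating the resulting moments by the Montgomery--Vaughan mean value theorem, and bounding the tail, one gets
\begin{equation*}
  \frac1T\int_T^{2T}e^{iuP_x(t)}\,dt=\prod_{p\le x}J_0\!\Big(\frac{u}{\sqrt p}\Big)+O\!\Big(\frac{1}{\log T}\Big),
\end{equation*}
uniformly for $|u|\le 1$; for our $x$ the Montgomery--Vaughan error is a negative power of $T$ and the Taylor tail is even smaller. Now set $u:=v/\sigma_T$, so $u^2=2v^2/\log\log T$ and $u\to0$ uniformly on $|v|\le\sqrt{\log\log T/\log\log\log T}$. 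Using $\log J_0(z)=-z^2/4+O(z^4)$ and Mertens' theorem $\sum_{p\le x}1/p=\log\log x+M+O(1/\log x)$,
\begin{equation*}
  \log\prod_{p\le x}J_0\!\Big(\frac{u}{\sqrt p}\Big)=-\frac{u^2}{4}\sum_{p\le x}\frac1p+O(u^4)=-\frac{v^2}{2}+v^2\,O\!\Big(\frac{\log\log\log T}{\log\log T}\Big),
\end{equation*}
the point being to keep $v^2$ as an overall factor: since $v^2(\log\log\log T)/\log\log T=O(1)$ on the stated range, the contributions of $M$, of $O(u^4)$, and of the Mertens error $O(u^2/\log x)$ are all of that shape, and $e^{-v^2/2}v^2$ is bounded, so exponentiating gives
\begin{equation*}
  \frac1T\int_T^{2T}e^{iuP_x(t)}\,dt=e^{-v^2/2}+v^2\,O\!\Big(\frac{\log\log\log T}{\log\log T}\Big)+O\!\Big(\frac{1}{\log T}\Big).
\end{equation*}

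It remains to pass from $P_x$ to $\operatorname{Im}\log\zeta$ inside the characteristic function. On RH I would write $\operatorname{Im}\log\zeta(1/2+it)=P_x(t)+E_x(t)$, where Selberg's formula (Appendix~A) exhibits $E_x$ as a prime‑power term $\sum_{p^k\le x,\,k\ge2}k^{-1}p^{-k/2}\sin(kt\log p)$, plus a term supported on smoothed primes in $(x,x^{2}]$, plus a contribution of the nontrivial zeros, each of mean square $O(1)$ for this range of $x$. Expanding $e^{iv\operatorname{Im}\log\zeta/\sigma_T}=e^{ivP_x/\sigma_T}\big(1+\tfrac{iv}{\sigma_T}E_x+O(\tfrac{v^2}{\sigma_T^2}E_x^2)\big)$, the quadratic term integrates to $O(\tfrac{v^2}{\sigma_T^2}\cdot\tfrac1T\int E_x^2)=v^2\,O(1/\log\log T)$, and in the linear term $\tfrac{iv}{\sigma_T}\cdot\tfrac1T\int e^{ivP_x/\sigma_T}E_x\,dt$ I would treat the three parts of $E_x$ separately: the prime powers correlate with $e^{ivP_x/\sigma_T}$ only through Bessel coefficients $J_k(u/\sqrt p)$ with $k\ge2$, contributing $O(u^2)$ to the integral and hence $O(v^3/\sigma_T^3)$ overall, which is of admissible size; the smoothed primes exceeding $x$ are not $x$-smooth, so by Montgomery--Vaughan their correlation with the $x$-smooth Dirichlet series $e^{ivP_x/\sigma_T}$ is $O(x^{O(1)}/T)$; and the zero‑contribution is handled through the structure of Selberg's error term, not by Cauchy--Schwarz against its mean square. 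Collecting these and combining with the previous display yields the claim, uniformly for $|v|\le\sqrt{\log\log T/\log\log\log T}$.

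The main obstacle is exactly this last estimate. The naive bound $|e^{i\alpha}-e^{i\beta}|\le|\alpha-\beta|$ is hopeless here: $\|E_x\|_2\gg1$ already because of the prime powers, so it would produce an error $\gg|v|/\sigma_T$, which does not fit inside $v^2O(\log\log\log T/\log\log T)+O(1/\log T)$; and likewise the zero‑contribution cannot afford a Cauchy--Schwarz loss. One must instead expand the exponential and exploit the near‑orthogonality, via the Montgomery--Vaughan mean value theorem, between the prime polynomial $e^{ivP_x/\sigma_T}$ (a Dirichlet series supported on $x$-smooth integers) and the large‑prime and zero parts of the Selberg error. It is here that RH enters, to make $E_x$ and its moments tractable, and here that it matters that the polynomial of Theorem~\ref{thma} can be taken as long as $x=e^{\log T/N}$ with $N$ only a power of $\log\log T$: this is what keeps Selberg's explicit error, and the truncation bias $\log N/\log\log T$, down to the size $\log\log\log T/\log\log T$ appearing in the statement.
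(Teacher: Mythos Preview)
Your overall plan coincides with the paper's: write $\operatorname{Im}\log\zeta=\operatorname{Im}\Sigma_{1,x}+\operatorname{Im}r_{1,x}$, Taylor expand the exponential to second order, feed the zeroth-order term into the effective form of Proposition~\ref{propa}, and bound the quadratic piece by the mean square of the remainder. Two small corrections: the sign is $\operatorname{Im}\log\zeta\approx -P_x$, and the mean square the paper uses is Tsang's $O(\log\log\log T)$, not $O(1)$; either bound lands the quadratic term inside $v^2O(\log\log\log T/\log\log T)$, so this does not break anything.

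The real gap is the linear term, and you locate it yourself. Your proposal is to split $E_x$ via Selberg's explicit formula into prime powers, smoothed primes beyond $x$, and a sum over zeros, and to control each by ``near-orthogonality via Montgomery--Vaughan'' against the $x$-smooth polynomial coming from the Taylor expansion of $e^{iuP_x}$. The first two pieces are Dirichlet polynomials and this works for them. But the zero sum $\sum_\gamma \sin((t-\gamma)\log x)\,K((t-\gamma)\log x)$ is \emph{not} a Dirichlet polynomial in $t$, so the Montgomery--Vaughan mean-value theorem does not apply to it at all; and you have already (correctly) ruled out Cauchy--Schwarz. No mechanism is offered for this piece, so the argument stops here.

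The paper avoids decomposing the error. Rather than estimate $\int r_{1,x}\,e^{iu\Sigma_{1,x}}$, it computes $\int \operatorname{Im}\log\zeta(1/2+it)\,e^{iu\Sigma_{1,x}}$ and $\int \operatorname{Im}\Sigma_{1,x}\,e^{iu\Sigma_{1,x}}$ separately and subtracts. After Taylor expanding $e^{iu\Sigma_{1,x}}$, the first integral is handled by a key lemma (on RH; a variant of results of Selberg and Goldston) which gives, for coprime $k,h\le T$,
\[
\int_T^{2T}\operatorname{Im}\log\zeta(1/2+it)\,(k/h)^{it}\,dt=\text{(explicit main term)}+O(\sqrt{kh}\,\log T).
\]
This lemma is the missing ingredient: it plays the role of a mean-value theorem when one factor is $\log\zeta$ itself rather than a Dirichlet polynomial, and is exactly what substitutes for the ``orthogonality with the zero sum'' that your scheme cannot supply. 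With it the linear term evaluates to an explicit Bessel sum over odd $k\ge 3$ (the $J_1$ parts cancel between the two integrals), hence is $O(u^4)$ and absorbed into the stated error.
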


In Section~\ref{mccp} we deal with the question if the convergence in Theorem~\ref{thma} can be extended to the complex plane. Assuming the Riemann hypothesis, we prove such a result for a weighted sum over primes.  

\begin{thm}\label{thmb} Assume RH. Let $x=e^{\log T/N}$ and $N$ such that $x\rightarrow\infty$ and $N/\log\log T\rightarrow\infty$ as $T\rightarrow\infty$. Furthermore, let $f$ be the function $f(u)=(\pi u/2)\cot(\pi u/2)$ and $\gamma_f=-0.1080\dots$ be the constant defined by $\prod_{p\leq x}(1-f^2(\log p/\log x)/p)=(e^{-\gamma_f}/\log x)(1+o(1))$. Then
\begin{equation*}
  e^{z^2(\log\log x+\gamma_f)/4}\frac{1}{T}\int_{T}^{2T} e^{iz\sum_{p\leq x}\frac{\sin (t\log p)}{\sqrt{p}}f\left(\frac{\log p}{\log x}\right)}dt
  \rightarrow \Phi(z)\ \ \ as\ T\rightarrow\infty
\end{equation*}
locally uniformly for $z\in\mathbb{C}$, where \(\Phi\) is given by~\eqref{eq:lf}.
\end{thm}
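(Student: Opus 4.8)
The plan is to reduce the weighted statement back to the unweighted mod-Gaussian convergence of Theorem~\ref{thma}, and then use the theory of $\zeta$ under RH to control the difference between the smoothed Dirichlet polynomial and a genuine approximation of $\operatorname{Im}\log\zeta$. First I would split the exponent as
\[
\sum_{p\leq x}\frac{\sin(t\log p)}{\sqrt p}f\!\left(\frac{\log p}{\log x}\right)
=\sum_{p\leq x}\frac{\sin(t\log p)}{\sqrt p}
+\sum_{p\leq x}\frac{\sin(t\log p)}{\sqrt p}\Bigl(f\!\left(\tfrac{\log p}{\log x}\right)-1\Bigr),
\]
and estimate the $L^2(dt/T)$-norm of the second (error) sum by the Montgomery--Vaughan mean value theorem, since $f(u)-1=O(u^2)$ near $u=0$ makes the tail harmless and the contribution of $p$ close to $x$ is dampened by $1/\sqrt p$. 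This would be enough if $z$ were real, but for complex $z$ an $L^2$ bound on the exponent does not control the exponential. So instead I would introduce the truncated function $\psi_x(t)=\sum_{p\leq x}p^{-1/2}\sin(t\log p)f(\log p/\log x)$, observe that, under RH, $\psi_x$ is (up to a small error controlled by the explicit-formula/Perron estimates already used for Theorem~\ref{thma} and Corollary~\ref{cor1}) bounded by $O(\log\log T)$ on a set of nearly full measure in $[T,2T]$, and exploit the fact that $e^{iz\psi_x(t)}$ is an entire function of $z$ whose restriction to $\mathbb R$ is exactly~\eqref{eq:i3} with the weight.

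The key mechanism is the following: for fixed $T$, the function
\[
F_T(z)=e^{z^2(\log\log x+\gamma_f)/4}\,\frac1T\int_T^{2T}e^{iz\psi_x(t)}\,dt
\]
is entire in $z$, and on the real axis it converges to $\Phi(u)$ by the (weighted version of the) argument proving Theorem~\ref{thma} — this is where the constant $\gamma_f$ enters, replacing $\gamma$, precisely because the normalizing product is now $\prod_{p\leq x}(1-f^2(\log p/\log x)/p)$ rather than $\prod_{p\leq x}(1-1/p)$. To pass from real-line convergence to locally uniform convergence on $\mathbb C$, I would establish a locally uniform bound on $|F_T(z)|$ on compact subsets of $\mathbb C$: write $|e^{iz\psi_x(t)}|=e^{-\operatorname{Im}(z)\psi_x(t)}$, use the sub-Gaussian tail of $\psi_x$ under its Selberg-type distribution (again, a consequence of RH plus the zero-density / Perron machinery, cf. the proof of Theorem~\ref{thma}) to bound $\frac1T\int_T^{2T}e^{|\operatorname{Im}z|\,|\psi_x(t)|}\,dt$ by $C\,e^{C|z|^2\log\log x}$, and combine with the prefactor $e^{z^2(\log\log x+\gamma_f)/4}$. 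The resulting family $\{F_T\}$ is then locally bounded, hence normal by Montel's theorem; any limit point is analytic and agrees with $\Phi$ on $\mathbb R$, so by the identity theorem every limit point equals $\Phi$, and the full sequence converges locally uniformly.

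The main obstacle is the uniform control of $\frac1T\int_T^{2T}e^{|\operatorname{Im}z|\,|\psi_x(t)|}\,dt$: an $L^2$ or moment bound on $\psi_x$ is not by itself sufficient, so one needs the genuinely sub-Gaussian large-deviation estimate for $\psi_x$ on $[T,2T]$, uniform in $T$ and with the right dependence on $\log\log x$. I expect this to follow by first approximating $\psi_x$ (under RH) by $\operatorname{Im}\log\zeta(1/2+it)$ up to a controlled error — using the Dirichlet polynomial's length being tied to $N$ with $N/\log\log T\to\infty$, exactly as in the proof of Theorem~\ref{thma} — and then invoking Selberg's moment bounds (or the mod-Gaussian convergence of Theorem~\ref{thma} itself, which already encodes sub-Gaussian tails for the unweighted sum, together with the $L^2$-smallness of the weight correction). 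A secondary technical point is that the weight $f(\log p/\log x)$ must not spoil the Montgomery--Vaughan diagonal term: one checks that $\sum_{p\leq x}p^{-1}f^2(\log p/\log x)=\log\log x+\gamma_f+o(1)$, which is exactly the definition of $\gamma_f$ in the statement, so the variance comes out correctly and the limit is again $\Phi$ with no change in the Bessel product.
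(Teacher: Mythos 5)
Your plan founders at its central step: the Montel/Vitali argument cannot be run, because the family $F_T$ cannot be shown to be locally bounded by the modulus estimates you propose. Writing $z=u-ih$, you bound $\bigl|\frac1T\int_T^{2T}e^{iz\psi_x(t)}\,dt\bigr|$ by $\frac1T\int_T^{2T}e^{h\psi_x(t)}\,dt$, and even granting the best possible sub-Gaussian estimate $\frac1T\int_T^{2T}e^{h\psi_x(t)}\,dt\leq Ce^{h^2(\log\log x)/4(1+o(1))}$, you obtain only $|F_T(z)|\leq Ce^{(\operatorname{Re}(z^2)+h^2)(\log\log x)/4(1+o(1))}=Ce^{u^2(\log\log x)/4(1+o(1))}$, which diverges as $T\to\infty$ for every $z$ off the imaginary axis (your stated bound $Ce^{C|z|^2\log\log x}$ is even worse). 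Local boundedness of $F_T$ on a two-dimensional neighbourhood is equivalent to square-root cancellation in the oscillatory factor $e^{iu\psi_x(t)}$ uniformly for complex $z$ — which is essentially the content of the theorem itself, and is exactly what the triangle inequality discards. Boundedness on the real axis alone (where the convergence of Theorem~\ref{thma} does give it) is a one-real-dimensional set and does not feed Montel's theorem.

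The fix, which is what the paper does, is to use the exponential-moment bound only where its loss is affordable: Taylor-expand $e^{iz\psi_x}$ to order $2N'=2\lfloor N/2\rfloor$, compare the first $2N'-1$ moments exactly with the random model $\sum_j p_j^{-1/2}\operatorname{Im}X_j\,f(\log p_j/\log x)$ via Montgomery--Vaughan, and apply Cauchy--Schwarz plus the RH bound (Goldston's explicit formula combined with Soundararajan's estimate, i.e.\ Proposition~\ref{propc}) only to the Taylor remainder, where the factor $c^{2N'}/(2N')!$ absorbs the $e^{C'h^2\log\log T}$ loss because $N/\log\log T\to\infty$. Two of your secondary points survive in this framework: the sub-Gaussian-type estimate you identify as the "main obstacle" is indeed needed (it is Proposition~\ref{propc}, proved from~\eqref{eq:G} and~\eqref{eq:c1}, not by approximating $\psi_x$ by $\operatorname{Im}\log\zeta$ and not via Theorem~\ref{thma}); and the identification of the variance constant $\gamma_f$ is as you say, though one must still verify that the weighted Bessel product $\prod_{p\leq x}(1-f^2(\log p/\log x)/p)^{-z^2/4}J_0(zf(\log p/\log x)/\sqrt p)$ converges to the same $\Phi$, which requires the splitting $p\leq y$ versus $y<p\leq x$ and the estimate $f^2(\log p/\log x)-1=O((\log\log x)/\log x)$ for $p\leq\log x$, not merely the matching of variances. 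Also note that your preliminary $L^2$ splitting of the weight cannot even establish the real-axis convergence with the constant $\gamma_f$, since $\sum_{p\leq x}p^{-1}(f(\log p/\log x)-1)^2$ is bounded below, so the correction term is not $o(1)$ in $L^2$; the weighted moment computation must be redone from scratch, as in~\eqref{eq:versionm} and~\eqref{eq:modconv}.
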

More general sums are possible as well (see \cite[Lemma 1]{G} and \cite[Lemma 1]{BH}). For the evaluation of $\gamma_f$ see \cite[proof of Lemma 6]{G}.

The crucial step from Theorem~\ref{thma} to Theorem~\ref{thmb} is an estimate of the exponential moments of the above sum. For this purpose let \(x\leq T^2\) and \(h\in\mathbb{R}\). Assuming the Riemann hypothesis, we then show that there exist constants \(C,C'\), and \(C''\) such that 
\begin{equation*}
 \frac{1}{T}\int_T^{2T} e^{h\sum_{n\leq x}\frac{\Lambda(n)}{\log n}\frac{\sin(t\log n)}{\sqrt{n}}f
 \left(\frac{\log n}{\log x}\right)}dt\leq C''e^{C|h|\frac{\log T}{\log x}+C'h^2\log\log T}.
\end{equation*}
Note that this inequality, which is almost a subgaussian bound, is valid beyond the range which is contained in Theorem~\ref{thma} and Theorem~\ref{thmb}.

We turn to the applications of Theorem~\ref{thmb}. 
As described above, Theorem~\ref{thma} can be used to obtain results in connection with the central limit theorem. 
In addition, Theorem~\ref{thmb} yields large deviations results. Applying the G\"{a}rtner-Ellis theorem and Theorem~\ref{thmb}, one obtains a large deviation principle (see \cite[chapter 1.2]{DZ} or Appendix~\ref{ldt} for the definition of the large deviation principle) from which we will deduce the following two Corollaries.

\begin{corollary} \label{cor2} Assume RH. Let $U_T$ be random variables uniformly distributed on $[T,2T]$. Then the family $(1/((\log\log T)/2))\operatorname{Im}\log\zeta(1/2+iU_T)$  satisfies the large deviation principle with the speed $1/((\log\log T)/2)$ and the rate function $I(h)=h^2/2$.
For instance,
\begin{multline}
\frac{1}{(\log\log T)/2}\log\Big(\frac{1}{T}\lambda(\{t\in[T,2T]:\operatorname{Im}\log\zeta(1/2+it)\geq h(\log\log T)/2\})\Big)\\\rightarrow-h^2/2\ \ \ as\ T\rightarrow\infty,\label{eq:ldpf}
\end{multline}
where $h>0$ and $\lambda$ denotes the Lebesgue measure.
\end{corollary}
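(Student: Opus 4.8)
The plan is to verify the hypotheses of the G\"{a}rtner-Ellis theorem (see \cite[Theorem~2.3.6]{DZ}; the standard extension to a general speed applies) for the random variables $Z_T:=\operatorname{Im}\log\zeta(1/2+iU_T)/((\log\log T)/2)$. Concretely, it is enough to show that for every $\lambda\in\mathbb{R}$
\[
  \Lambda(\lambda):=\lim_{T\to\infty}\frac{2}{\log\log T}\log\Bigl(\frac{1}{T}\int_T^{2T}e^{\lambda\operatorname{Im}\log\zeta(1/2+it)}\,dt\Bigr)=\frac{\lambda^{2}}{2}.
\]
Since $\lambda\mapsto\lambda^{2}/2$ is finite, differentiable on all of $\mathbb{R}$, and hence lower semicontinuous and essentially smooth, G\"{a}rtner-Ellis then yields the large deviation principle for $Z_T$ with speed $1/((\log\log T)/2)$ and rate function $I(h)=\Lambda^{*}(h)=\sup_{\lambda\in\mathbb{R}}(\lambda h-\lambda^{2}/2)=h^{2}/2$. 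The asymptotics~\eqref{eq:ldpf} then follows by applying the principle to the closed half-line $[h,\infty)$ (upper bound) and the open half-line $(h,\infty)$ (lower bound), using that $I$ is continuous and nondecreasing on $[0,\infty)$ so that $\inf_{[h,\infty)}I=\inf_{(h,\infty)}I=h^{2}/2$ for $h>0$, and observing that $\frac{1}{T}\lambda(\{t\in[T,2T]:\operatorname{Im}\log\zeta(1/2+it)\ge h(\log\log T)/2\})$ is exactly the mass the law of $Z_T$ assigns to $[h,\infty)$.

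To identify $\Lambda$, fix $N=N(T)$ that is admissible for Theorem~\ref{thmb} and, in addition, satisfies $\log N=o(\log\log T)$ --- for instance $N=(\log\log T)^{2}$, which also gives $x=e^{\log T/N}\to\infty$ and $N/\log\log T\to\infty$, and ensures $\log\log x=\log\log T+o(\log\log T)$. Starting from $\log\zeta(s)=\sum_{n}\Lambda(n)n^{-s}/\log n$ for $\operatorname{Re}(s)>1$, one expects, and assuming the Riemann hypothesis can establish, an approximate explicit formula
\[
  \operatorname{Im}\log\zeta(1/2+it)=-\sum_{p\le x}\frac{\sin(t\log p)}{\sqrt p}\,f\!\left(\frac{\log p}{\log x}\right)+E_x(t),
\]
where the remainder $E_x(t)$ absorbs the (bounded) prime-power terms and the error term of the formula; the weight $f(u)=(\pi u/2)\cot(\pi u/2)$ is exactly the one that makes this remainder tractable (compare \cite[Lemma~1]{G} and \cite[Lemma~1, Lemma~3]{BH}). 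On the Riemann hypothesis, using estimates of the type that enter the proof of Theorem~\ref{thmb}, now applied to the Dirichlet polynomial over the range $x<n\le T^{O(1)}$, one obtains the exponential-moment bound
\[
  \frac{1}{T}\int_T^{2T}e^{h E_x(t)}\,dt\le e^{o(\log\log T)}\qquad\text{for every fixed }h\in\mathbb{R},
\]
the point being that the variance of $E_x$ is only of order $\sum_{x<p\le T^{O(1)}}1/p\asymp\log N=o(\log\log T)$.

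These two inputs pin down $\Lambda$ by a H\"{o}lder sandwich between $\frac{1}{T}\int_T^{2T}e^{\lambda\operatorname{Im}\log\zeta(1/2+it)}\,dt$ and the prime-sum integrals $A_T(r):=\frac{1}{T}\int_T^{2T}e^{-r\lambda\sum_{p\le x}\frac{\sin(t\log p)}{\sqrt p}f(\log p/\log x)}\,dt$, $r\in\mathbb{R}$. Applying Theorem~\ref{thmb} at the purely imaginary point $z=ir\lambda$ (legitimate since it holds for all $z\in\mathbb{C}$) gives $A_T(r)=e^{r^{2}\lambda^{2}(\log\log x+\gamma_f)/4}(\Phi(ir\lambda)+o(1))$, where $\Phi(ir\lambda)=\prod_{p\in\mathbb{P}}(1-1/p)^{r^{2}\lambda^{2}/4}J_0(ir\lambda/\sqrt p)$ is a positive constant (each factor is positive since $J_0(iw)=\sum_{k\ge0}(w/2)^{2k}/(k!)^{2}\ge1$ for real $w$, and the product converges because $\log[(1-1/p)^{r^{2}\lambda^{2}/4}J_0(ir\lambda/\sqrt p)]=O(1/p^{2})$); hence, using $\log\log x\sim\log\log T$, $\frac{2}{\log\log T}\log A_T(r)\to r^{2}\lambda^{2}/2$ for each fixed $r$. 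For the upper bound on $\Lambda(\lambda)$, fix $r>1$ with conjugate exponent $r'$; H\"{o}lder's inequality applied to the factorisation $e^{\lambda\operatorname{Im}\log\zeta(1/2+it)}=e^{-\lambda\sum_{p\le x}(\cdots)}e^{\lambda E_x(t)}$ coming from the approximate formula, together with the bound on $E_x$, yields $\frac{1}{T}\int_T^{2T}e^{\lambda\operatorname{Im}\log\zeta(1/2+it)}\,dt\le A_T(r)^{1/r}\,e^{o(\log\log T)}$, whence $\limsup_{T\to\infty}\frac{2}{\log\log T}\log\bigl(\frac{1}{T}\int_T^{2T}e^{\lambda\operatorname{Im}\log\zeta(1/2+it)}\,dt\bigr)\le r\lambda^{2}/2$; letting $r\downarrow1$ gives $\Lambda(\lambda)\le\lambda^{2}/2$. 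For the lower bound, start from $A_T(-1)=\frac{1}{T}\int_T^{2T}e^{\lambda\sum_{p\le x}(\cdots)}\,dt$, for which $\frac{2}{\log\log T}\log A_T(-1)\to\lambda^{2}/2$, and bound it above, again by H\"{o}lder, by $\bigl(\frac{1}{T}\int_T^{2T}e^{-r\lambda\operatorname{Im}\log\zeta(1/2+it)}\,dt\bigr)^{1/r}\bigl(\frac{1}{T}\int_T^{2T}e^{r'\lambda E_x(t)}\,dt\bigr)^{1/r'}$; rearranging and letting $r\downarrow1$ gives $\Lambda(\lambda)\ge\lambda^{2}/2$. Hence $\Lambda(\lambda)=\lambda^{2}/2$ for all real $\lambda$, and the corollary follows. (Equivalently: G\"{a}rtner-Ellis and Theorem~\ref{thmb} directly furnish the large deviation principle, with rate $h^{2}/2$, for the family $-\sum_{p\le x}(\cdots)/((\log\log T)/2)$, and the bound on $E_x$ makes this family exponentially equivalent to $Z_T$.)

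The step I expect to be the main obstacle is the estimate for $E_x$. Theorem~\ref{thmb} supplies the prime-sum main term with the sharp constant, but passing from it to $\operatorname{Im}\log\zeta(1/2+it)$ requires an approximate explicit formula on the Riemann hypothesis whose remainder has exponential moments of size $e^{o(\log\log T)}$ --- equivalently, moments up to order $\asymp\log\log T$ governed by the small variance $\asymp\log N$ --- and this calls for the finer analytic theory of the zeta-function (the explicit formula and estimates on the distribution of its zeros, as used by Selberg, by Bombieri and Hejhal, and by Tsang) rather than the soft mod-Gaussian statement alone. A minor auxiliary point is that $N$ must be chosen inside the band where $x\to\infty$, $N/\log\log T\to\infty$, and $\log N=o(\log\log T)$ hold simultaneously.
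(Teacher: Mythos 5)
Your overall architecture --- G\"{a}rtner--Ellis applied to the prime sum via Theorem~\ref{thmb} at purely imaginary $z$, then transfer to $\operatorname{Im}\log\zeta(1/2+it)$ by controlling the remainder, then extraction of~\eqref{eq:ldpf} from the resulting principle --- is the same as the paper's; the paper transfers via exponential equivalence and \cite[Theorem 4.2.13]{DZ} rather than your H\"{o}lder sandwich, but that structural difference is harmless. The gap is in the one step you yourself flag as the obstacle: the claim that $\frac1T\int_T^{2T}e^{hE_x(t)}\,dt\le e^{o(\log\log T)}$ for $E_x=\operatorname{Im}r_{f,x}$. You propose to obtain this from ``estimates of the type that enter the proof of Theorem~\ref{thmb}, applied to the Dirichlet polynomial over $x<n\le T^{O(1)}$''. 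This cannot work as stated. On RH the remainder is not a Dirichlet polynomial: by Goldston's formula~\eqref{eq:G} it is dominated by the sum over zeros, and the estimate the proof of Theorem~\ref{thmb} actually has for that object (Proposition~\ref{propc}) carries a factor $e^{C|h|\log T/\log x}=e^{C|h|N}$; since Theorem~\ref{thmb} forces $N/\log\log T\to\infty$, this is far larger than $e^{o(\log\log T)}$. (A lesser issue: the prime-power terms are not ``bounded'' --- $|\operatorname{Im}\Sigma_{f,x}-\operatorname{Im}\Sigma^*_{f,x}|$ can be as large as $(\log\log x)/2+O(1)$ pointwise --- though their contribution is indeed controllable by mean-value estimates.) A generic Cauchy--Schwarz between \eqref{eq:c1} and Proposition~\ref{propc} likewise only gives $e^{O(h^2\log\log T)}$, which is useless here.

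The paper's device for closing exactly this gap is the tail bound~\eqref{eq:exb}: set $V=\delta\log\log T$, decompose $\operatorname{Im}r_{f,x}$ into four pieces using Selberg's weight $g(u)=e^{-2u}\min(1,2(1-u))$ and an auxiliary, much longer polynomial of length $y=T^{1/V}$ --- chosen precisely so that the unavoidable term $\frac{C}{16}\frac{\log t}{\log y}$ in Selberg's inequality~\eqref{eq:selb} is $O(V)$, i.e.\ of the scale being estimated --- and then bound each piece's tail at height $\asymp V$ by Markov's inequality with moments of order $V$ (Appendix~\ref{AB}), obtaining the superexponential decay $e^{-(1-o(1))V\log V}$. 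That bound is what yields exponential equivalence at speed $2/\log\log T$ (and would also deliver the $e^{o(\log\log T)}$ exponential-moment estimate your H\"{o}lder argument needs). Without a substitute for this argument your proof is incomplete; with it, your H\"{o}lder sandwich and the paper's exponential-equivalence step are interchangeable.
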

\begin{corollary} \label{cor3} Assume RH. Let $h\in\mathbb{R}$. Then
\begin{equation*}
 \frac{1}{(\log\log T)/2}\log\bigg(\frac{1}{T}\int_T^{2T}e^{h\operatorname{Im}\log\zeta(1/2+it)}dt\bigg)\rightarrow h^2/2\ \ \ as\ T\rightarrow\infty.
\end{equation*}
\end{corollary}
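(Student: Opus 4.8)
Corollary~\ref{cor3} is the Laplace-principle counterpart of the large deviation principle supplied by Corollary~\ref{cor2}. Put $a_T=(\log\log T)/2$ and $Z_T=\operatorname{Im}\log\zeta(1/2+iU_T)/a_T$, so that $\frac1T\int_T^{2T}e^{h\operatorname{Im}\log\zeta(1/2+it)}\,dt=\E\,e^{a_T h Z_T}$, and Corollary~\ref{cor2} says that $\{Z_T\}$ obeys the large deviation principle at speed $a_T$ with good rate function $I(x)=x^2/2$. The plan is to apply Varadhan's integral lemma (\cite[Theorem~4.3.1]{DZ}) to the function $x\mapsto hx$: once the moment condition
\begin{equation*}
  \limsup_{T\to\infty}\frac1{a_T}\log\Big(\frac1T\int_T^{2T}e^{\gamma h\operatorname{Im}\log\zeta(1/2+it)}\,dt\Big)<\infty\quad\text{for some }\gamma>1
\end{equation*}
is verified, Varadhan's lemma gives $\lim_{T\to\infty}\frac1{a_T}\log\E\,e^{a_T h Z_T}=\sup_{x\in\mathbb R}(hx-x^2/2)=h^2/2$, which is the assertion. (The matching lower bound uses only the large deviation lower bound of Corollary~\ref{cor2}, by restricting the average to a small neighbourhood of the maximiser $x=h$; the moment condition enters only in the upper bound, to control the contribution of $\{hZ_T\ge M\}$ as $M\to\infty$.)

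Everything thus reduces to the moment condition, which I would obtain from the Selberg-type explicit formula under RH together with the exponential moment estimate displayed earlier. Take $x=T^{1/3}$, so $\log T/\log x=3$, and set $D(t)=\sum_{n\le x}\frac{\Lambda(n)}{\log n}\frac{\sin(t\log n)}{\sqrt n}f(\log n/\log x)$; that estimate then gives $\frac1T\int_T^{2T}e^{wD(t)}\,dt\le C''e^{C|w|+C'w^2\log\log T}$ for all $w\in\mathbb R$. Under RH one has $\operatorname{Im}\log\zeta(1/2+it)=-D(t)+R(t)$ with a remainder $R$ that is $O(\log T/\log x)=O(1)$ uniformly for this $x$, so $\frac1T\int_T^{2T}e^{wR(t)}\,dt=e^{O_w(1)}$. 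Hölder's inequality now yields, for every $\gamma$ and $h$,
\begin{equation*}
  \frac1T\int_T^{2T}e^{\gamma h\operatorname{Im}\log\zeta(1/2+it)}\,dt\le\Big(\frac1T\int_T^{2T}e^{-2\gamma hD(t)}\,dt\Big)^{1/2}\Big(\frac1T\int_T^{2T}e^{2\gamma hR(t)}\,dt\Big)^{1/2}\le e^{O_{\gamma,h}(\log\log T)},
\end{equation*}
so the $\limsup$ above is finite, as required.

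Given Corollary~\ref{cor2}, I do not expect a real obstacle here: the passage through Varadhan's lemma and the verification of the moment condition are soft, since all the substance has been packed into Theorems~\ref{thma}--\ref{thmb} and Corollary~\ref{cor2}. The obstacle resurfaces only if one wants a proof of Corollary~\ref{cor3} that does not go through the large deviation principle. In that case one would evaluate Theorem~\ref{thmb} at $z=-iw$ for fixed $w\in\mathbb R$, with $x=e^{\log T/N}$ chosen so that $N/\log\log T\to\infty$ but $\log N=o(\log\log T)$ (e.g.\ $N=(\log\log T)^2$, whence $\log\log x\sim\log\log T$), getting $\frac1T\int_T^{2T}e^{wD_p(t)}\,dt\sim\Phi(-iw)\,e^{w^2(\log\log x+\gamma_f)/4}$ with $D_p$ the prime sum and $\Phi(-iw)=\prod_p(1-1/p)^{w^2/4}I_0(w/\sqrt p)\in(0,\infty)$ (using $J_0(iy)=I_0(y)$; each factor is $1+O(p^{-2})$), hence $\frac1{a_T}\log\frac1T\int_T^{2T}e^{wD_p(t)}\,dt\to w^2/2$. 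Writing $\operatorname{Im}\log\zeta(1/2+it)=-D_p(t)+R(t)$ and applying Hölder's inequality with conjugate exponents $p,q$ in both directions, one bounds $\frac1T\int_T^{2T}e^{h\operatorname{Im}\log\zeta}\,dt$ from above by $\big(\frac1T\int e^{-phD_p}\,dt\big)^{1/p}\big(\frac1T\int e^{qhR}\,dt\big)^{1/q}$ and from below by $\big(\frac1T\int e^{-(h/p)D_p}\,dt\big)^{p}\big(\frac1T\int e^{-(qh/p)R}\,dt\big)^{-p/q}$; taking $\frac1{a_T}\log$ and letting $p\to1^+$, both bounds tend to $h^2/2$, \emph{provided} $\frac1T\int_T^{2T}e^{wR(t)}\,dt=e^{o(\log\log T)}$ for each fixed $w$ — and establishing this when $x$ is as large as Theorem~\ref{thmb} permits, where the remainder can a priori be of size $\log T/\log x$, is exactly what the special weight $f(u)=(\pi u/2)\cot(\pi u/2)$ is there to make possible.
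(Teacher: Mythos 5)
Your overall strategy is the same as the paper's: apply Varadhan's integral lemma (Theorem~\ref{tc2}) to the large deviation principle of Corollary~\ref{cor2}, with the only remaining task being the moment condition~\eqref{eq:varc}. The paper disposes of that condition in one line by citing Soundararajan's exponential moment bound~\eqref{eq:c1} for $\operatorname{Im}\log\zeta(1/2+it)$ itself, which gives $\frac1T\int_T^{2T}e^{\gamma h\operatorname{Im}\log\zeta(1/2+it)}\,dt\le C''e^{C'\gamma^2h^2\log\log T}$ and hence a finite $\limsup$ at speed $2/\log\log T$.

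Your verification of the moment condition, however, has a genuine gap. You write $\operatorname{Im}\log\zeta(1/2+it)=-D(t)+R(t)$ with $x=T^{1/3}$ and claim $R(t)=O(\log T/\log x)=O(1)$ uniformly. That is false: in Goldston's explicit formula~\eqref{eq:G} the remainder is not just the $O(1/(t(\log x)^2))$ term but also the sum over zeros, which is controlled by $\sum_{|(t-\gamma)\log x|\le1}1+\sum_{|(t-\gamma)\log x|>1}((t-\gamma)\log x)^{-2}$; the first sum is $N(t+1/\log x)-N(t-1/\log x)$, whose fluctuating part is a difference of $S$-values and, even on RH, can be as large as $\log t/\log\log t$ rather than $O(1)$. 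Consequently $\frac1T\int e^{wR(t)}\,dt=e^{O_w(1)}$ does not follow. Bounding the exponential moments of this sum over zeros is precisely the content of the proof of Proposition~\ref{propc}, and it requires~\eqref{eq:c1} as input; so your argument runs the implication in the wrong direction (the paper deduces the bound for $D$ from the bound for $\operatorname{Im}\log\zeta$, not conversely), and closing the loop the way you do needs the false pointwise bound on $R$. The fix is trivial: drop the explicit formula entirely and verify~\eqref{eq:varc} directly from~\eqref{eq:c1}, which is stated for arbitrary $h\in\mathbb{R}$ and is exactly what the paper does. (Your sketched second route avoiding the LDP correctly identifies that its missing ingredient is again an $e^{o(\log\log T)}$ bound on the exponential moments of the remainder, so no further comment is needed there.)
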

Related papers which also discuss large deviations results are the work of Radziwi\l\l\ \cite{Rad}, who extended the range of Selberg's central limit theorem for $\operatorname{Re}\log\zeta(1/2+it)$ and the work of Soundararajan \cite{SO}, who proved large deviation bounds for $\operatorname{Re}\log\zeta(1/2+it)$. In fact, Soundararajan \cite[Corollary A]{SO} completed the proof of Corollary~\ref{cor3} in the case of $\operatorname{Re}\log\zeta(1/2+it)$ by proving the upper bound. The result can be stated as follows. For all $\epsilon>0$ and all $h>0$ we have
$(\log T)^{h^2-\epsilon}\ll_{h,\epsilon}\int_T^{2T}|\zeta(1/2+it)|^{2h}dt\ll_{h,\epsilon}(\log T)^{h^2+\epsilon}$. Note that the proof of the upper bound also applies to the case of $\operatorname{Im}\log\zeta(1/2+it)$ and that we apply a slightly weaker upper bound in the proofs of Theorem~\ref{thmb} and Corollary~\ref{cor3}. 
\begin{notation} For $y\geq 2$ and a function $g:[0,1]\rightarrow [0,1]$, we define
\begin{align*}
 &\Sigma_{g,y}(t)=\sum_{p\leq y}\frac{1}{p^{1/2+it}}g\Big(\frac{\log p}{\log y}\Big),\\
 &\Sigma_{g,y}^*(t)=\sum_{n\leq y}\frac{\Lambda(n)}{\log n}\frac{1}{n^{1/2+it}}g\Big(\frac{\log n}{\log y}\Big),
\end{align*}
$r_{g,y}(t)=\log\zeta(1/2+it)-\Sigma_{g,y}(t)$, and $r_{g,y}^*(t)=\log\zeta(1/2+it)-\Sigma_{g,y}^*(t)$.
\end{notation}
\section{Moments of a sum over primes}\label{moments}
Section~\ref{moments} is devoted to some standard mean value calculations. In doing so, we will apply the following generalization of the mean value theorem of Montgomery and Vaughan contained in \cite[Theorem 1.4.3]{R} (see also \cite[Lemma 3.1]{T}). Let $a_1,\dots,a_M$ and $b_1,\dots,b_M$ be complex numbers, $M\geq 2$, and let $T>0$. Then
\begin{multline}\label{eq:ram}
 \frac{1}{T}\int_T^{2T}\bigg(\sum_{m\leq M}{a_m m^{-it}}\bigg)
 \bigg(\overline{\sum_{m\leq M}{b_m m^{-it}}}\bigg)dt\\= 
 \sum_{m\leq M}a_m\overline{b}_m+\theta\frac{2D}{T}\sqrt{\sum_{m\leq M}m|a_m|^2}
 \sqrt{\sum_{m\leq M}m|b_m|^2},
\end{multline}
where $\theta$ depends on the various parameters but satisfies $|\theta|\leq 1$ and $D$ is the universal constant in \cite[Theorem 1.4.3]{R}.
\begin{proposition}\label{prop1} Let $x\geq 2$ and $T>0$ be real numbers, $k$ be a nonnegative integer, and $p_1,\dots,p_n$ be the prime numbers not exceeding $x$. Then
\begin{multline}\label{eq:im}
  \frac{1}{T}\int_T^{2T}\bigg(\sum_{p\leq x}\frac{\sin (t\log p)}{\sqrt{p}}\bigg)^{2k}dt\\
  =\frac{1}{2^{2k}}\binom{2k}{k}\sum_{\lambda_1+\cdots+\lambda_n=k}
  \bigg(\frac{k!}{\lambda_1!\cdots\lambda_n!}\bigg)^2p_1^{-\lambda_1}\cdots p_n^{-\lambda_n} 
  +\theta\frac{2D}{T}\sqrt{n^{2k}(2k)!}
\end{multline}
and $|(1/T)\int_T^{2T}(\sum_{p\leq x}\sin(t\log p)/\sqrt{p})^{2k+1}dt|\leq(2D/T)\sqrt{n^{2k+1}(2k+1)!}$ with $|\theta|\leq 1$ and $D$ the constant in~\eqref{eq:ram}. Furthermore, the main term in~\eqref{eq:im} is bounded by $((2k)!/2^{2k}k!)(\sum_{p\leq x}1/p)^k$.
\end{proposition}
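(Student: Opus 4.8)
The plan is to reduce the whole statement to the mean value estimate~\eqref{eq:ram} by writing the sine in exponential form. Put
\[
A(t)=\sum_{p\le x}p^{-1/2+it},\qquad\text{so that}\qquad \sum_{p\le x}\frac{\sin(t\log p)}{\sqrt p}=\operatorname{Im}A(t)=\frac{1}{2i}\bigl(A(t)-\overline{A(t)}\bigr).
\]
Raising to a power $\ell\ge 1$ (we will take $\ell=2k$ and $\ell=2k+1$; the case $\ell=0$ is trivial) and expanding binomially,
\[
\Bigl(\sum_{p\le x}\frac{\sin(t\log p)}{\sqrt p}\Bigr)^{\ell}=\frac{1}{(2i)^{\ell}}\sum_{j=0}^{\ell}\binom{\ell}{j}(-1)^{\ell-j}A(t)^{j}\,\overline{A(t)}^{\,\ell-j}.
\]
For an integer $a\ge 0$ one has $A(t)^{a}=\sum_{m}\alpha_a(m)\,m^{it}$, where $\alpha_a(m)$ vanishes unless $m=p_1^{\lambda_1}\cdots p_n^{\lambda_n}$ with $\lambda_1+\cdots+\lambda_n=a$, in which case $\alpha_a(m)=m^{-1/2}\,a!/(\lambda_1!\cdots\lambda_n!)$ (the multinomial coefficient counts the ordered factorizations of $m$ into $a$ primes $\le x$). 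Since $\overline{A(t)}^{\,\ell-j}=\sum_m\alpha_{\ell-j}(m)\,m^{-it}$ and $A(t)^{j}=\overline{\sum_m\alpha_j(m)\,m^{-it}}$ (the coefficients being real), each summand equals $\bigl(\sum_m\alpha_{\ell-j}(m)m^{-it}\bigr)\overline{\bigl(\sum_m\alpha_j(m)m^{-it}\bigr)}$, a product to which~\eqref{eq:ram} applies with $M\le x^{\ell}$.

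Applying~\eqref{eq:ram} termwise, the diagonal contribution of the $j$-th summand is $\sum_m\alpha_{\ell-j}(m)\alpha_j(m)$, and this vanishes unless some $m$ is simultaneously a product of $j$ and of $\ell-j$ primes, i.e.\ unless $\ell-j=j$. Hence for $\ell=2k+1$ no diagonal term survives, while for $\ell=2k$ only $j=k$ contributes, with diagonal
\[
\sum_m\alpha_k(m)^2=\sum_{\lambda_1+\cdots+\lambda_n=k}\Bigl(\frac{k!}{\lambda_1!\cdots\lambda_n!}\Bigr)^2 p_1^{-\lambda_1}\cdots p_n^{-\lambda_n}.
\]
The prefactor of this term is $(2i)^{-2k}\binom{2k}{k}(-1)^{k}=2^{-2k}\binom{2k}{k}$, which is exactly the constant in the main term of~\eqref{eq:im}, so the main term is accounted for.

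It remains to bound the errors. From $m\,|\alpha_a(m)|^2=\bigl(a!/(\lambda_1!\cdots\lambda_n!)\bigr)^2$, bounding one multinomial coefficient by $a!$ and summing the other by the multinomial theorem gives $\sum_m m\,|\alpha_a(m)|^2\le a!\,n^{a}$. Thus the error of the $j$-th summand is at most $(2D/T)\sqrt{j!\,n^{j}}\sqrt{(\ell-j)!\,n^{\ell-j}}\le(2D/T)\sqrt{\ell!}\,n^{\ell/2}$ by $j!(\ell-j)!\le\ell!$, and summing the absolute values against the weights $2^{-\ell}\binom{\ell}{j}$ (whose total is $1$) yields the overall bound $(2D/T)\sqrt{n^{\ell}\ell!}$ for both $\ell=2k$ and $\ell=2k+1$. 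Finally, the claimed upper bound for the main term follows the same way: bounding one factor $k!/(\lambda_1!\cdots\lambda_n!)$ by $k!$ and evaluating $\sum_{\lambda_1+\cdots+\lambda_n=k}(k!/(\lambda_1!\cdots\lambda_n!))\,p_1^{-\lambda_1}\cdots p_n^{-\lambda_n}=(\sum_{p\le x}1/p)^{k}$, together with $2^{-2k}\binom{2k}{k}k!=(2k)!/(2^{2k}k!)$. The computation is purely bookkeeping; the one point that needs attention is tracking which $m$ survive on the diagonal --- controlled by the number of prime factors counted with multiplicity --- and checking that the powers of $2$, $i$, and $-1$ collapse to the clean constant $2^{-2k}\binom{2k}{k}$. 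There is no analytic difficulty beyond~\eqref{eq:ram}.
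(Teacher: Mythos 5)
Your proposal is correct and follows essentially the same route as the paper: write the sine as $(p^{it}-p^{-it})/2i$, expand by the binomial and multinomial theorems, apply the mean value theorem~\eqref{eq:ram} termwise so that only the balanced term $j=k$ contributes a diagonal, and bound the off-diagonal remainders via $j!/(\lambda_1!\cdots\lambda_n!)\le j!$ and $j!(\ell-j)!\le\ell!$. The bookkeeping of the constant $2^{-2k}\binom{2k}{k}$ and the bound on the main term also match the paper's argument.
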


\begin{proof}
From $\sin (t\log p)=(p^{it}-p^{-it})/2i$, we obtain
\begin{multline}\label{eq:binth}
  \frac{1}{T}\int_T^{2T}\bigg(\sum_{p\leq x}\frac{\sin (t\log p)} 
  {\sqrt{p}}\bigg)^{k}dt\\
  =\frac{1}{(2i)^k}\sum_{j=0}^k\binom{k}{j}
  \frac{(-1)^j}{T}\int_T^{2T}\bigg(\sum_{p\leq x}{\frac{1}{p^{1/2+it}}}\bigg)^j
 \bigg(\overline{\sum_{p\leq x}\frac{1}{p^{1/2+it}}}\bigg)^{k-j}dt.
\end{multline}
For $j=1,\dots,k$ the multinomial theorem yields
\begin{equation}\label{eq:multth}
 \bigg(\sum_{p\leq x}{\frac{1}{p^{1/2+it}}}\bigg)^j
 =\sum_{\lambda_1+\cdots+\lambda_n=j}\frac{j!}{\lambda_1!\cdots\lambda_n!}
 (p_1^{-\lambda_1}\cdots p_n^{-\lambda_n})^{1/2+it}.
\end{equation}
If we plug in~\eqref{eq:multth} into~\eqref{eq:binth} with $k$ replaced by $2k$, we obtain from~\eqref{eq:ram} that
\begin{align}
  \frac{1}{T}\int_T^{2T}\bigg(\sum_{p\leq x}&\frac{\sin (t\log p)}{\sqrt{p}}\bigg)^{2k}dt\label{eq:im2}\\
  =\frac{1}{2^{2k}}&\binom{2k}{k}\sum_{\lambda_1+\cdots+\lambda_n=k}
  \bigg(\frac{k!}{\lambda_1!\cdots\lambda_n!}\bigg)^2
  p_1^{-\lambda_1}\cdots p_n^{-\lambda_n}\nonumber \\
  +\frac{\theta2D}{2^{2k}T}\sum_{j=0}^{2k}\binom{2k}{j}&
  \sqrt{\sum_{\lambda_1+\cdots+\lambda_n=j}\bigg(\frac{j!}{\lambda_1!\cdots\lambda_n!}\bigg)^2}
  \sqrt{\sum_{\lambda_1+\cdots+\lambda_n=2k-j}\bigg(\frac{(2k-j)!}{\lambda_1!\cdots\lambda_n!}\bigg)^2}\nonumber
\end{align}
with \(|\theta|\leq 1\). Applying $j!/(\lambda_1!\cdots\lambda_n!)\leq j!$, $j=0,\dots,2k$, we bound the absolute value of the remainder by
\begin{equation}\label{eq:lll}
 \frac{2D}{2^{2k}T}\sum_{j=0}^{2k}\binom{2k}{j}\sqrt{n^jj!\,n^{2k-j}(2k-j)!}\leq
 \frac{2D}{T}\sqrt{n^{2k}(2k)!}.
\end{equation}
The main term in~\eqref{eq:im} can be bounded similarly. As in~\eqref{eq:im2} and~\eqref{eq:lll}, we also bound the $(2k+1)$th moment. Note that there is no main term in this case. This completes the proof.
\end{proof} 
We want to compare these mean value estimates to some random variables expectations.
Therefore, let \(X_1,X_2,\dots\) be an i.i.d. sequence of random variables uniformly distributed on the unit circle and let $p_1,\dots,p_n$ be the primes not exceeding $x$. Then
\begin{equation}\label{eq:ism}
 \mathbb{E}\bigg[\bigg(\sum_{i=1}^{n}\frac{\operatorname{Im}X_i}{\sqrt{p_i}}\bigg)^{2k}\bigg] = \frac{1}{2^{2k}}\binom{2k}{k}\sum_{\lambda_1+\cdots+\lambda_n=k}
 \bigg(\frac{k!}{\lambda_1!\cdots\lambda_n!}\bigg)^2p_1^{-\lambda_1}\cdots p_n^{-\lambda_n}
\end{equation}
and $\mathbb{E}\left[(\sum_{i=1}^n\operatorname{Im}X_i/\sqrt{p_i})^{2k+1}\right]=0$. To prove this, we replace $\sin(t\log p)$ by $\operatorname{Im}X_i$ and integration by expectation in~\eqref{eq:binth} and~\eqref{eq:multth} and then apply the formula $\mathbb{E}[X_1^{\lambda_1}\cdots X_n^{\lambda_n}X_1^{-\mu_1}\cdots
X_n^{-\mu_n}]=1$ if $\lambda_j=\mu_j$ for all $j=1,\dots,n$ and $=0$ else.

\section{Bessel functions}\label{bessel}

The Bessel functions appear in the Fourier expansion of the function \(e^{iz\sin \theta}\),
\begin{equation}\label{eq:bf}
  e^{iz\sin \theta}=\sum_{k=-\infty}^{\infty}J_k(z)e^{ik\theta}.
\end{equation}
Explicitly the $k$th Bessel function \(J_k(z)\) is given by
\begin{equation}\label{eq:infs}
  J_k(z)=\sum_{n=0}^\infty\frac{(-1)^n(z/2)^{k+2n}}{n!(k+n)!}
\end{equation}
for \(k\geq 0\) and given by the relation \(J_{k}(z)=(-1)^kJ_{-k}(z)\) for \(k<0\) (for these and more facts about Bessel functions see, e.g., the book of Andrews, Askey, and Roy \cite{A}). This section is devoted to the following mod-Gaussian convergence result (compare to \cite[Proposition 4.1]{JKN}).
\begin{proposition}\label{prop2} Let $X_1,X_2,\dots$ be an i.i.d. sequence of random variables uniformly distributed on the unit circle and let $p_1,p_2,\dots$ be the increasing sequence of all primes. Then
\begin{equation}\label{eq:cmc}
  e^{z^2(\log\log x+\gamma)/4}\mathbb{E} \Big[e^{iz\sum_{j=1}^{\pi(x)}\frac{\operatorname{Im}X_j}{\sqrt{p_j}}}\Big]\rightarrow 
  \Phi(z)\ \ \ \ \ \ as\ x\rightarrow\infty
\end{equation}
locally uniformly for $z\in\mathbb{C}$. Here, $\gamma$ denotes Euler's constant, $\pi(x)$ denotes the number of primes not exceeding $x$, and $\Phi(z)$ is  given by~\eqref{eq:lf}.
\end{proposition}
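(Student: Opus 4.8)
The plan is to evaluate the expectation on the left of~\eqref{eq:cmc} \emph{exactly} as a finite product over primes, and then to absorb the Gaussian normalisation by Mertens' third theorem, leaving an infinite product that converges locally uniformly.

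First I would compute the characteristic function of a single summand. Writing $\operatorname{Im}X_j=\sin\theta_j$ with $\theta_j$ uniform on $[0,2\pi)$ and applying the Fourier expansion~\eqref{eq:bf} termwise (legitimate since~\eqref{eq:infs} converges locally uniformly and $|e^{ik\theta_j}|=1$), one gets $\mathbb{E}[e^{iw\operatorname{Im}X_j}]=\sum_k J_k(w)\,\mathbb{E}[e^{ik\theta_j}]=J_0(w)$, because $\mathbb{E}[e^{ik\theta_j}]=0$ for $k\neq0$. Taking $w=z/\sqrt{p_j}$ and using independence of the $X_j$ yields
\[
  \mathbb{E}\Big[e^{iz\sum_{j=1}^{\pi(x)}\operatorname{Im}X_j/\sqrt{p_j}}\Big]
  =\prod_{p\le x}J_0\Big(\frac{z}{\sqrt p}\Big).
\]

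Next I would split the normalised quantity as
\[
  e^{z^2(\log\log x+\gamma)/4}\prod_{p\le x}J_0\Big(\tfrac z{\sqrt p}\Big)
  =\Big(e^{z^2(\log\log x+\gamma)/4}\prod_{p\le x}(1-\tfrac1p)^{z^2/4}\Big)
   \prod_{p\le x}(1-\tfrac1p)^{-z^2/4}J_0\Big(\tfrac z{\sqrt p}\Big),
\]
where $(1-1/p)^{s}$ is understood as $\exp(s\log(1-1/p))$ with the \emph{real} logarithm, so every factor is entire in $z$ and the exact identity $e^{(z^2/4)\log\log x}(\log x)^{-z^2/4}=1$ is available. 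By Mertens' theorem $\prod_{p\le x}(1-1/p)=e^{-\gamma}(\log x)^{-1}(1+o(1))$, hence the first bracket equals $\exp(\tfrac{z^2}{4}\log(1+o(1)))=1+o(1)$, uniformly for $z$ in a compact set. For the second product I would use $J_0(w)=1-w^2/4+O(w^4)$ from~\eqref{eq:infs} together with $(1-1/p)^{-z^2/4}=1+z^2/(4p)+O(1/p^2)$; the $1/p$-terms cancel, leaving $(1-1/p)^{-z^2/4}J_0(z/\sqrt p)=1+O_z(1/p^2)$ with implied constant locally bounded, so $\sum_p\big|(1-1/p)^{-z^2/4}J_0(z/\sqrt p)-1\big|$ converges locally uniformly and the partial products tend, locally uniformly in $z$, to the entire function $\Phi$ of~\eqref{eq:lf}. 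Multiplying the two limits gives~\eqref{eq:cmc}.

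No step here is deep; the points needing care are keeping all the estimates uniform on compact $z$-sets — which is exactly what makes the convergence locally uniform, as required for the later applications via analyticity — and the bookkeeping with the powers $(1-1/p)^{z^2/4}$, which is why defining them through the real logarithm of $1-1/p$ is convenient. The only candidate for a genuine obstacle would be if one needed a quantitative error in Mertens' formula, but here the qualitative $(1+o(1))$ is enough.
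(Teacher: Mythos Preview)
Your proof is correct and follows essentially the same route as the paper: compute $\mathbb{E}[e^{iz\operatorname{Im}X_j}]=J_0(z/\sqrt{p_j})$ via~\eqref{eq:bf}, use independence to get the finite product, rewrite the normalisation via Mertens' theorem as $(1+o(1))^{z^2/4}$ times the partial products of~\eqref{eq:lf}, and invoke normal convergence of that product. The only difference is that you spell out the $O_z(1/p^2)$ cancellation explicitly, whereas the paper simply asserts normal convergence and cites a textbook.
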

\begin{proof}
By~\eqref{eq:bf}, we have
\begin{equation}\label{eq:esm}
  \mathbb{E}\big[e^{iz\operatorname{Im} X_1}\big]=\frac{1}{2\pi}\int_0^{2\pi} e^{iz\sin \theta}d\theta=J_0(z).
\end{equation}
Applying the independence of the $X_j$'s,~\eqref{eq:esm}, and finally Merten's formula $\prod_{p\leq x}(1-1/p)=(e^{-\gamma}/\log x)(1+o(1))$, we obtain that the left hand side of~\eqref{eq:cmc} is equal to 
\begin{equation*}
 e^{z^2(\log\log x+\gamma)/4}\prod_{p\leq x}J_0\Big(\frac{z}{\sqrt{p}}\Big)=(1+o(1))^{z^2/4}\prod_{p\leq x}\Big(1-\frac{1}{p}\Big)^{-z^2/4}
  J_0\Big(\frac{z}{\sqrt{p}}\Big).
\end{equation*}
It remains to show that the above product converges to $\Phi(z)$, locally uniformly for $z\in\mathbb{C}$. This follows from the fact that the product $\Phi(z)$ is normally convergent (see \cite[Chapter IV.1, especially Remark IV.1.7]{FB}). This completes the proof.
\end{proof}
Consider the random variables $\operatorname{Im}\Sigma_{1,x}(-U_T)$, $U_T$ being random variables uniformly distributed on $[T,2T]$. As mentioned in the Introduction, one can use the method of moments to deduce that, as $x\rightarrow\infty$, $x=T^{o(1)}$, $(1/\sqrt{(\log\log x)/2})\operatorname{Im}\Sigma_{1,x}(-U_T)$ converges in distribution to a Gaussian random variable with expectation $0$ and variance $1$ (see \cite[Proof of Theorem B]{BH}). We will generalize this result by considering the cumulants of $\operatorname{Im}\Sigma_{1,x}(-U_T)$. 

If $Y$ is a real random variable such that $\mathbb{E}[e^{zY}]$ exists and is finite for all $z\in\mathbb{C}$, $\mathbb{E}[e^{zY}]$ is an analytic function and there exists a neighbourhood of $0$ where $\log \mathbb{E}[e^{zY}]=\sum_{m=1}^{\infty}\kappa_m(Y)z^m/m!$. The coefficients $\kappa_m(Y)$, $m\geq 1$, are called the cumulants of $Y$. Thus, $\kappa_m(Y)$ is equal to the $m$th derivative of $\log\mathbb{E}[e^{zY}]$ evaluated  at $0$. 
\begin{corollary}\label{cum} Let $x=e^{\log T/N}$ and $N$ such that $x\rightarrow\infty$ and $N\rightarrow\infty$ as $T\rightarrow\infty$ and let $U_T$ be random variables uniformly distributed on $[T,2T]$. Then, as $T\rightarrow\infty$,
$\kappa_2(\operatorname{Im}\Sigma_{1,x}(-U_T))-(\log\log x+\gamma)/2\rightarrow c_2$ and for $m\neq 2$ $\kappa_m(\operatorname{Im}\Sigma_{1,x}(-U_T))\rightarrow c_m$, where the $c_m$'s are defined by the series expansion
$\log\Phi(-iz)=\sum_{m=1}^{\infty}c_mz^m/m!$, for $z$ in a neighbourhood of $0$.
\end{corollary}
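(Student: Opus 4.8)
The plan is to compare the cumulants of $\operatorname{Im}\Sigma_{1,x}(-U_T)$ with the cumulants of the random-model quantity $\sum_{j=1}^{\pi(x)}\operatorname{Im}X_j/\sqrt{p_j}$, and then read off the limiting values from Proposition~\ref{prop2}. The key observation is that for a real random variable $Y$ with all moments finite, the cumulant $\kappa_m(Y)$ is a fixed universal polynomial in the moments $\E[Y],\dots,\E[Y^m]$; hence it suffices to show that the moments of $\operatorname{Im}\Sigma_{1,x}(-U_T)$ converge, as $T\to\infty$, to the moments of $\sum_{j}\operatorname{Im}X_j/\sqrt{p_j}$ (for the $2k$th moment after suitable normalization; more precisely, that both converge to the same thing up to the divergent piece in the second moment). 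First I would note that $\operatorname{Im}\Sigma_{1,x}(-U_T)=\sum_{p\le x}\sin(U_T\log p)/\sqrt p$, so that Proposition~\ref{prop1} gives, for each fixed $k$,
\[
 \E\Big[\big(\operatorname{Im}\Sigma_{1,x}(-U_T)\big)^{2k}\Big]
 =\frac{1}{2^{2k}}\binom{2k}{k}\sum_{\lambda_1+\cdots+\lambda_n=k}\Big(\frac{k!}{\lambda_1!\cdots\lambda_n!}\Big)^2 p_1^{-\lambda_1}\cdots p_n^{-\lambda_n}
 +\theta\frac{2D}{T}\sqrt{n^{2k}(2k)!},
\]
while equation~\eqref{eq:ism} identifies the main term here with $\E[(\sum_{j=1}^{n}\operatorname{Im}X_j/\sqrt{p_j})^{2k}]$. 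Since $n=\pi(x)\le x=e^{\log T/N}=T^{1/N}$ and $N\to\infty$, the error term $\frac{2D}{T}\sqrt{n^{2k}(2k)!}\le \frac{2D}{T}\, T^{k/N}\sqrt{(2k)!}\to 0$ for every fixed $k$; the odd moments satisfy the analogous bound and tend to $0$, matching $\E[(\sum_j\operatorname{Im}X_j/\sqrt{p_j})^{2k+1}]=0$. Therefore every moment of $\operatorname{Im}\Sigma_{1,x}(-U_T)$ differs from the corresponding moment of $S_x:=\sum_{j=1}^{\pi(x)}\operatorname{Im}X_j/\sqrt{p_j}$ by $o(1)$ as $T\to\infty$.

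Next I would feed this into the cumulant-from-moments polynomial: since $\kappa_m$ is a polynomial (with real coefficients, independent of the underlying distribution) in the first $m$ moments, and since the first $m$ moments of $\operatorname{Im}\Sigma_{1,x}(-U_T)$ and of $S_x$ differ by $o(1)$ while the moments of $S_x$ themselves stay bounded as $x\to\infty$ (they converge, as we are about to see), continuity of this polynomial gives $\kappa_m(\operatorname{Im}\Sigma_{1,x}(-U_T))-\kappa_m(S_x)\to 0$ as $T\to\infty$ for each fixed $m$. It remains to compute $\lim_{x\to\infty}\kappa_m(S_x)$. By Proposition~\ref{prop2}, $\E[e^{izS_x}]=e^{-z^2(\log\log x+\gamma)/4}\big(\Phi(z)+o(1)\big)$ locally uniformly for $z\in\mathbb{C}$; taking logarithms on a fixed neighbourhood of $0$ (where $\Phi$ is analytic and non-vanishing, so $\log\Phi$ is well defined and analytic) gives
\[
 \log\E[e^{izS_x}]=-\frac{z^2}{4}(\log\log x+\gamma)+\log\Phi(z)+o(1),
\]
and the $o(1)$ here is in fact $o(1)$ uniformly on a small disc, so by Cauchy's estimates the same holds for all Taylor coefficients. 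Writing $\log\Phi(-iz)=\sum_{m\ge1}c_m z^m/m!$ (equivalently $\log\Phi(z)=\sum_{m\ge1}c_m(iz)^{-m}\cdots$; cleanest is to substitute $z\mapsto -iz$ so that $\E[e^{zS_x}]$ appears) one reads off: the coefficient of $z^2/2!$ in $\log\E[e^{zS_x}]$ is $\kappa_2(S_x)=(\log\log x+\gamma)/2+c_2+o(1)$, while for $m\ne 2$ the $-\tfrac{z^2}{4}(\log\log x+\gamma)$ term contributes nothing and $\kappa_m(S_x)=c_m+o(1)$. Combining with the first paragraph yields exactly the claimed statement: $\kappa_2(\operatorname{Im}\Sigma_{1,x}(-U_T))-(\log\log x+\gamma)/2\to c_2$ and $\kappa_m(\operatorname{Im}\Sigma_{1,x}(-U_T))\to c_m$ for $m\ne2$.

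The main obstacle is a bookkeeping one rather than a deep one: one must be careful that the convergence $\E[e^{izS_x}]\to e^{-z^2(\log\log x+\gamma)/4}\Phi(z)$, which holds locally uniformly, can legitimately be differentiated term-by-term to extract cumulant asymptotics. This is handled by restricting to a fixed small closed disc around $0$ on which $\Phi$ is non-vanishing (so $\log\Phi$ is holomorphic there), writing $\E[e^{izS_x}]e^{z^2(\log\log x+\gamma)/4}=\Phi(z)+\varepsilon_x(z)$ with $\sup_{|z|\le\delta}|\varepsilon_x(z)|\to 0$, and applying Cauchy's integral formula for the derivatives — uniform convergence of holomorphic functions forces uniform convergence of all derivatives on a slightly smaller disc, hence convergence of each cumulant. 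One should also double-check the innocuous point that $S_x$ has all moments finite and that $\E[e^{zS_x}]$ is entire in $z$ (it is a finite product of the entire functions $\E[e^{z\operatorname{Im}X_j/\sqrt{p_j}}]=J_0(-iz/\sqrt{p_j})$), so that the cumulant generating function is genuinely analytic near $0$; and that $N\to\infty$ (rather than the stronger $N/\log\log T\to\infty$ of Theorems~\ref{thma} and~\ref{thmb}) is all that is needed here, since only fixed moments are taken.
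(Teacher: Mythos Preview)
Your overall strategy coincides with the paper's: compare the cumulants of $\operatorname{Im}\Sigma_{1,x}(-U_T)$ with those of the model sum $S_x=\sum_{j\le\pi(x)}\operatorname{Im}X_j/\sqrt{p_j}$ via the moment--cumulant polynomial, and then read off the limiting cumulants of $S_x$ from the uniform convergence of the (log of the) characteristic function. The second half of your argument (taking logarithms of Proposition~\ref{prop2} on a small disc and invoking Cauchy's estimates) is correct and is essentially how the paper proceeds as well, though the paper writes $\log\mathbb{E}[e^{izS_x}]=\sum_{p\le x}\log J_0(z/\sqrt{p})$ explicitly and applies Mertens directly rather than quoting Proposition~\ref{prop2}.

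There is, however, a genuine slip in the first half. You assert that ``the moments of $S_x$ themselves stay bounded as $x\to\infty$ (they converge, as we are about to see)'' and then appeal to continuity of the cumulant polynomial. This is false: already $\mathbb{E}[S_x^2]=\tfrac12\sum_{p\le x}1/p\sim\tfrac12\log\log x\to\infty$, and more generally the $2k$th moment is of size $(\log\log x)^k$ by the bound at the end of Proposition~\ref{prop1}. What you later establish is convergence of the \emph{cumulants} of $S_x$ (after removing the divergent piece in $\kappa_2$), not of the moments. Since the inputs to the polynomial blow up, a bare continuity argument does not give $\kappa_m(\operatorname{Im}\Sigma_{1,x}(-U_T))-\kappa_m(S_x)\to0$.

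The repair is exactly what the paper does: keep track of sizes. For $k\le m$ the moments (of either variable) are $O((\log\log x)^m)$, while the moment differences are $O(T^{k/N-1})=O(T^{-a})$ for some $a\in(0,1)$ once $T$ is large. Writing $\kappa_m=\sum a_{\lambda}\,\mu_1^{\lambda_1}\cdots\mu_m^{\lambda_m}$ and telescoping the difference over one factor at a time, each summand in $\kappa_m(\operatorname{Im}\Sigma_{1,x}(-U_T))-\kappa_m(S_x)$ is bounded by a constant times $(\log\log x)^{C_m}\cdot T^{-a}\to0$. With this quantitative bookkeeping in place your argument is complete and matches the paper's.
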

\begin{proof}
By the construction of $\Phi$, there exists a real number $0<r\leq 1$ such that for $|z|\leq r$, $\log\Phi(z)=\sum_{p\in\mathbb{P}}((-z^2/4)\log(1-1/p)+\log J_0(z/\sqrt{p}))$. Hence, by Merten's formula,
\begin{equation}\label{eq:logconv}
(-z^2/4)(\log\log x+\gamma)+\sum_{p\leq x}\log J_0\Big(\frac{-iz}{\sqrt{p}}\Big)
\rightarrow \log\Phi(-iz)\ \ \ as\ x\rightarrow\infty
\end{equation}
uniformly for $|z|\leq r$, $z\in\mathbb{C}$. The uniform convergence implies (see \cite[Theorem III.1.3]{FB}), that the $m$th derivative of the left hand side of~\eqref{eq:logconv} evaluated at $0$ converges to $c_m$. Hence, under the assumptions of Proposition~\ref{prop2}, the cumulants of  $\sum_{j=1}^{\pi(x)}\operatorname{Im}X_j/\sqrt{p_j}$ satisfy the convergence described in Corollary~\ref{cum}, since $\mathbb{E}[\exp(z\sum_{j=1}^{\pi(x)}\operatorname{Im}X_j/\sqrt{p_j})]=\prod_{p\leq x}J_0(-iz/\sqrt{p})$. It remains to show that for $m\geq 1$
\begin{equation}\label{eq:cumc}
 \kappa_m\left( \operatorname{Im}\Sigma_{1,x}(-U_T)\right) -\kappa_m\bigg(\sum_{j=1}^{\pi(x)} \frac{\operatorname{Im}X_j}{\sqrt{p_j}}\bigg)\rightarrow 0\ \ \ \ \ \ as\ T\rightarrow\infty.
\end{equation}
To prove this, we use the fact that the cumulants can be expressed in terms of the moments, namely $\kappa_m(Y)=\sum a_{\lambda_1,\dots,\lambda_m}\mathbb{E}[Y^1]^{\lambda_1}\cdots \mathbb{E}[Y^m]^{\lambda_m}$, where the sum is over all positive integers such that $1\lambda_1+2\lambda_2+\cdots+m\lambda_m=m$, $a_{\lambda_1,\dots,\lambda_m}$ are integers, and $Y$ is a random variable as above. If we plug in Proposition~\ref{prop1} and~\eqref{eq:ism} into this formula,~\eqref{eq:cumc} follows from multiplying out since for $k\leq m$ and $x\geq 3$ the main terms in~\eqref{eq:im} are $O((\sum_{p\leq x}1/p)^m)=O((\log\log x)^m)$ (see \cite[(5) of chapter 7]{D}), while for $k\leq m$ the remainders in~\eqref{eq:im} are $O(T^{(m/N)-1})$ which is $O(T^{-a})$ for some $0<a<1$ if $T$ is sufficiently large. 
\end{proof}

\section{Mod-convergence of a sum over primes}\label{mcsp}

By means of Proposition~\ref{prop1} and~\eqref{eq:ism}, we can apply  the method of moments for fixed \(x\) and obtain the following convergence
\begin{equation}\label{eq:lr}
  \frac{1}{T}\int_{T}^{2T} e^{iu\sum_{p\leq x}\frac{\sin (t\log p)}{\sqrt{p}}}dt\rightarrow
  \prod_{p\leq x}J_0\Big(\frac{u}{\sqrt{p}}\Big)\ \ \ as\ T\rightarrow\infty.
\end{equation}
Another proof of~\eqref{eq:lr} is contained in \cite[Theorem 5.1]{L}.
The techniques used therein can be applied to get Theorem~\ref{thma} and Theorem~\ref{thmb} for the choice \(x=(\log T)^{2-\epsilon}\), \(\epsilon>0\) arbitrary.
The improvement of Theorem~\ref{thma} follows from Proposition~\ref{propa} combined with Proposition~\ref{prop2}.

\begin{proposition}\label{propa} Let \(c>1\) be a constant. Define \(x=e^{\log T/N}\) with \(N=(c'ec^2/4)\log\log T\), where \(c'>1\) is allowed to depend on $T$ but such that $x\rightarrow\infty$ as $T\rightarrow\infty$. For $T\geq3$, sufficiently large such that $x\geq 2$ and $N\geq 1$, we have
\begin{equation}\label{eq:p1}
 \frac{1}{T}\int_{T}^{2T} e^{iu\sum_{p\leq x}\frac{\sin (t\log p)}{\sqrt{p}}}dt
 =\prod_{p\leq x}J_0\Big(\frac{u}{\sqrt{p}}\Big)+O((1/c')^{N-1}+(2c^2/\log x)^N)
\end{equation}
uniformly for \(|u|\leq c\), $u\in\mathbb{R}$.
\end{proposition}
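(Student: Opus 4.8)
The plan is to compare $\frac1T\int_T^{2T}e^{iuS_t}\,dt$, where $S_t=\sum_{p\le x}\sin(t\log p)/\sqrt p$, with $\prod_{p\le x}J_0(u/\sqrt p)=\E[e^{iuY}]$, where $X_1,X_2,\dots$ are i.i.d.\ uniform on the unit circle, $Y=\sum_{j=1}^{n}\operatorname{Im}X_j/\sqrt{p_j}$, and the last equality is independence together with~\eqref{eq:esm}. One matches their Taylor coefficients up to order $M:=2\lfloor N\rfloor$. Since $x$ is fixed, $|S_t|\le\sum_{p\le x}p^{-1/2}$ is bounded, so $e^{iuS_t}=\sum_{m\ge0}(iu)^mS_t^m/m!$ converges uniformly in $t$ and, likewise, $e^{iuY}=\sum_{m\ge 0}(iuY)^m/m!$ may be averaged/expected termwise; this gives the exact decomposition
\[\frac1T\int_T^{2T}e^{iuS_t}dt-\prod_{p\le x}J_0\Big(\frac u{\sqrt p}\Big)=R_1+R_2+R_3,\]
with $R_1=\frac1T\int e^{iuS_t}dt-\sum_{m<M}\tfrac{(iu)^m}{m!}\,\frac1T\int S_t^m dt$, $R_3=\sum_{m<M}\tfrac{(iu)^m}{m!}\E[Y^m]-\E[e^{iuY}]$, and $R_2=\sum_{m<M}\tfrac{(iu)^m}{m!}\bigl(\frac1T\int S_t^m dt-\E[Y^m]\bigr)$. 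I would estimate $R_1,R_3$ using $\bigl|e^{ia}-\sum_{m<M}(ia)^m/m!\bigr|\le|a|^M/M!$ for real $a$, which gives $|R_1|\le\tfrac{|u|^M}{M!}\frac1T\int S_t^{M}dt$ and $|R_3|\le\tfrac{|u|^M}{M!}\E[Y^{M}]$ (both integrands being even powers of real quantities).

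\textbf{The diagonal part.} By Proposition~\ref{prop1} the average $\frac1T\int S_t^{M}dt$ equals its main term plus $\theta\,2DT^{-1}\sqrt{n^{M}M!}$ with $|\theta|\le1$, and by~\eqref{eq:ism} that main term is exactly $\E[Y^{M}]$; Proposition~\ref{prop1} further bounds it by $\tfrac{M!}{2^{M}(M/2)!}\bigl(\sum_{p\le x}1/p\bigr)^{M/2}$. Hence the diagonal contribution to $|R_1|+|R_3|$ is at most $2\cdot\tfrac{|u|^M}{M!}\E[Y^M]\le 2\bigl(\tfrac{c^2}{4}\sum_{p\le x}1/p\bigr)^{M/2}/(M/2)!$ for $|u|\le c$. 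Using $(M/2)!\ge(M/(2e))^{M/2}$ together with Mertens' formula in the form $\sum_{p\le x}1/p=\log\log x+O(1)=\log\log T-\log N+O(1)$ and $N=(c'ec^2/4)\log\log T$, one checks that $\tfrac{c^2e}{2M}\sum_{p\le x}1/p\le 1/c'$ for all large $T$ (the $-\log N$ term in Mertens supplies exactly the slack needed because $M/2=\lfloor N\rfloor$ falls just short of $N$). Therefore the diagonal part is $\le 2(1/c')^{M/2}\ll(1/c')^{N-1}$.

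\textbf{The off-diagonal part.} For every $m$ one has $\bigl|\frac1T\int S_t^m dt-\E[Y^m]\bigr|\le 2DT^{-1}\sqrt{n^m m!}$ (the main terms cancelling against~\eqref{eq:ism}, the odd moments of $Y$ being $0$), so the remaining contribution — $R_2$ together with the error term left in $|R_1|$ — is at most $\tfrac{2D}{T}\sum_{m\le M}\tfrac{(c\sqrt n)^m}{\sqrt{m!}}$. Since $n=\pi(x)$ is astronomically larger than $M\asymp N$, the summands increase in $m$ over $0\le m\le M$ with ratio $\ge2$, so this sum is $\le 4(c\sqrt n)^{M}/\sqrt{M!}\le 4\bigl(c^2ne/M\bigr)^{M/2}$. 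Here comes the crucial cancellation: $T=x^{N}$ exactly and $\pi(x)\ll x/\log x$, so $n^{M/2}/T\le n^{M/2}x^{-M/2}\ll(\log x)^{-M/2}$ (the choice $M/2=\lfloor N\rfloor\le N$ keeping the exponent of $x$ nonpositive). Feeding this back and using $M/2\ge e$ yields an off-diagonal bound $\ll\bigl(c^2e/(M\log x)\bigr)^{M/2}\ll(2c^2/\log x)^{M/2}\ll(2c^2/\log x)^{N}$ for large $T$, up to the harmless adjustment forced by $M/2$ differing from $N$. Combining the diagonal and off-diagonal parts proves~\eqref{eq:p1}, uniformly for $|u|\le c$.

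\textbf{The main obstacle} is the off-diagonal estimate. The accumulated Montgomery--Vaughan error over the first $M$ moments a priori grows like $n^{M/2}/T$, and it is only the exact relation $T=x^{N}$ — so that $n^{M/2}/x^{N}$ is a nonpositive power of $x$ times a power of $\pi(x)/x\asymp 1/\log x$ — that makes it collapse to size $(2c^2/\log x)^{N}$. This is exactly why the truncation order must be taken $\asymp 2N$: any larger and $n^{M/2}/T$ blows up, any smaller and the $M$-th moment's main term is not yet of size $(1/c')^{N-1}$; and it is why one needs the Chebyshev/PNT bound $\pi(x)\ll x/\log x$ rather than the trivial $\pi(x)\le x$. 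A secondary nuisance is the bookkeeping required because $N$ is not an integer, which the $-1$ in the exponent $(1/c')^{N-1}$ is designed to absorb.
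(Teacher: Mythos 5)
Your argument is essentially the paper's own proof: the same Taylor truncation at order $2\lfloor N\rfloor$, the same matching of moments via Proposition~\ref{prop1} and~\eqref{eq:ism}, the same re-summation of the stochastic model, and the same bookkeeping for the diagonal term (your use of the $-\log N$ slack in Mertens' formula is, if anything, slightly more explicit than the paper's).

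One step in your off-diagonal estimate is stated backwards: since $2c^2/\log x<1$ for large $T$, the inequality $(2c^2/\log x)^{M/2}\ll(2c^2/\log x)^{N}$ is false (the left side exceeds the right by a factor up to $\log x/(2c^2)$, because $M/2=\lfloor N\rfloor\leq N$). This is not a real gap — you can either keep the factor $(e/M)^{M/2}$ that you discarded, which more than covers the deficit, or do as the paper does and enlarge the exponent \emph{before} dividing by $T$: bound the accumulated Montgomery--Vaughan errors by $O((c^2\pi(x))^{N}/T)$ first (legitimate there since the base $c^2\pi(x)\geq 1$), and only then use $T=x^{N}$ and $\pi(x)\leq 2x/\log x$ to get $(2c^2/\log x)^{N}$. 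With that one-line repair your proof is complete.
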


\begin{proof}[Proof of Theorem~\ref{thma}]
We apply Proposition~\ref{propa} with $x$, $N$ as in Theorem~\ref{thma} and $c$ an arbitrary constant with $c>1$. Since $c'\rightarrow\infty$ in that case, the remainder in~\eqref{eq:p1} is $o(\exp(-c^2(\log\log T)/4))$. If we multiply in~\eqref{eq:p1} both sides by $\exp(u^2(\log\log x+\gamma)/4)$ and then apply Proposition~\ref{prop2}, we obtain~\eqref{eq:i3} uniformly for $|u|\leq c$. Since $c>1$ is arbitrary, this completes the proof.
\end{proof}

\begin{proof}[Proof of Proposition~\ref{propa}] Let $N'=\left\lfloor N\right\rfloor$. From the Taylor expansion $e^{iu}=\sum_{k\leq 2N'-1}(iu)^k/k!+$ $\theta u^{2N'}/(2N')!$, $u\in\mathbb{R}$, with $|\theta|\leq 1$, we obtain
\begin{multline}\label{eq:eqp1}
  \frac{1}{T}\int_{T}^{2T} e^{iu\sum_{p\leq x}\frac{\sin (t\log p)}{\sqrt{p}}}dt  = 
  \sum_{k\leq 2N'-1}\frac{(iu)^k}{k!}\frac{1}{T}\int_{T}^{2T}\bigg(\sum_{p\leq x}\frac{\sin (t\log p)}{\sqrt{p}}\bigg)^kdt\\
  + \theta\frac{u^{2N'}}{(2N')!}\frac{1}{T}\int_{T}^{2T}
  \bigg(\sum_{{p\leq x}}{\frac{\sin(t\log p)}{\sqrt{p}}}\bigg)^{2N'}dt
\end{multline}
with \(|\theta|\leq 1\). By Proposition~\ref{prop1}, the remainder is 
\begin{equation*}
  O\bigg(\frac{c^{2N'}}{N'!}\frac{1}{2^{2N'}}\Big(\sum_{p\leq x}\frac{1}{p}\Big)^{N'}+\frac{(c^2\pi(x))^{N'}}{T}\bigg).
\end{equation*}
Using the bound $(N')!\geq (N'/e)^{N'}$, elementary results in the theory of primes, namely the formulas $\sum_{p\leq x}1/p=\log\log x+c_1+O(1/\log x)$ and $\pi(x)\leq 2 x/\log x$, and finally $N'=\left\lfloor N\right\rfloor$, this is 
\begin{equation*}
  O\bigg(\Big(\frac{ec^2\log\log T}{4N'}\Big)^{N'}+\frac{(c^2\pi(x))^N}{T}\bigg)= O\bigg(\Big(\frac{1}{c'}\Big)^{N-1}+\Big(\frac{2c^2}{\log x}\Big)^N\bigg).
\end{equation*}

Now, let $X_1,X_2,\dots$ be an i.i.d. sequence of random variables uniformly distributed on the unit circle. By Proposition~\ref{prop1} and~\eqref{eq:ism}, the moments in~\eqref{eq:eqp1} are equal to those of the stochastic model plus a remainder which is bounded by $(2D/T)\sqrt{(\pi(x))^kk!}$. The resulting remainders in~\eqref{eq:eqp1}, $k\leq 2N'-1$, add up to $O((c^2\pi(x))^{N}/T)=O((2c^2/\log x)^N)$. Hence,~\eqref{eq:eqp1} is equal to 
\begin{equation*}
 \sum_{k\leq 2N'-1}\frac{(iu)^k}{k!}
 \mathbb{E}\bigg[\bigg(\sum_{j=1}^{\pi(x)}\frac{\operatorname{Im}X_j}{\sqrt{p_j}}\bigg)^k\bigg]+
 O((1/c')^{N-1}+(2c^2/\log x)^N).
\end{equation*}
Applying the above Taylor expansion again, we obtain
\begin{multline*}
 \prod_{p\leq x}J_0\Big(\frac{u}{\sqrt{p}}\Big)=
 \mathbb{E}\bigg[e^{iu\sum_{j=1}^{\pi(x)}\frac{\operatorname{Im}X_j}{\sqrt{p_j}}}\bigg]\\ 
 = \sum_{k\leq 2N'-1}\frac{(iu)^k}{k!}\mathbb{E}\bigg[\bigg(\sum_{j=1}^{\pi(x)}\frac{\operatorname{Im}X_j}{\sqrt{p_j}}\bigg)^k\bigg]+ \theta\frac{u^{2N'}}{(2N')!}\mathbb{E}\bigg[\bigg(\sum_{j=1}^{\pi(x)}\frac{\operatorname{Im}X_j}{\sqrt{p_j}}\bigg)^{2N'}\bigg]
\end{multline*}
with $|\theta|\leq 1$. The remainder already appeared in~\eqref{eq:eqp1} and is $O((1/c')^{N-1})$. This completes the proof.
\end{proof}

\section{Mod-convergence in the complex plane}\label{mccp}

Section~\ref{mccp} is devoted to the proof of Theorem~\ref{thmb}. Here, we will apply an explicit formula obtained by Goldston \cite[Lemma 1]{G} assuming RH. For $4\leq x\leq t^2$ and $t\neq\gamma$, we have
\begin{multline}
 \operatorname{Im}\log\zeta(1/2+it)=-\sum_{n\leq x}\frac{\Lambda(n)}{\log n}\frac{\sin(t\log n)}{\sqrt{n}}f\bigg(\frac{\log n}{\log x}\bigg)\\
 +\sum_{\gamma}\sin((t-\gamma)\log x)\int_0^\infty\frac{u}{u^2+((t-\gamma)\log x)^2}\frac{du}{\sinh u}+O\bigg(\frac{1}{t(\log x)^2}\bigg)\label{eq:G},
\end{multline}
where $f(u)=(\pi u/2)\cot(\pi u/2)$. We will also apply the following estimate obtained by Soundararajan assuming RH. For every $h\in\mathbb{R}$ there exist constants $C',C''>0$ such that 
\begin{equation}\label{eq:c1}
 \frac{1}{T}\int_T^{2T} e^{h\operatorname{Im}\log\zeta(1/2+it)}dt\leq C''e^{C'h^2\log\log T}.
\end{equation}
Soundararajan \cite{SO} proved~\eqref{eq:c1} for $\operatorname{Re}\log\zeta(1/2+it)$. However, by using \cite[Theorem 1]{S} instead of~\cite[Proposition]{SO}, his arguments apply to $\operatorname{Im}\log\zeta(1/2+it)$, too. We prove (compare to \cite[Lemma 3 and Corallary]{BH}):

\begin{proposition}\label{propc} Assume RH. Let $4\leq x\leq T^2$, $f(u)=(\pi u/2)\cot(\pi u/2)$. For every \(h\in\mathbb{R}\) there exist constants $C,C'$, and $C''$ such that
\begin{equation*}
 \frac{1}{T}\int_T^{2T} e^{h\sum_{n\leq x}\frac{\Lambda(n)}{\log n}\frac{\sin(t\log n)}{\sqrt{n}}f
 \left(\frac{\log n}{\log x}\right)}dt\leq C''e^{C|h|\frac{\log T}{\log x}+C'h^2\log\log T}.
\end{equation*}
\end{proposition}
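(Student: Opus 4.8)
Write $P(t)=\sum_{n\le x}\frac{\Lambda(n)}{\log n}\frac{\sin(t\log n)}{\sqrt n}f\!\big(\tfrac{\log n}{\log x}\big)$ for the exponent. The plan is to invoke Goldston's explicit formula~\eqref{eq:G} to trade $P(t)$ for $-\operatorname{Im}\log\zeta(1/2+it)$ up to a sum $R(t)$ over zeros, to split the resulting product by the Cauchy--Schwarz inequality, to estimate the zeta--factor by Soundararajan's bound~\eqref{eq:c1}, and to control $R(t)$ by recognising it — after an integration by parts — as a smoothed copy of $\operatorname{Im}\log\zeta$ sitting on top of an explicit main term of size $\asymp(\log T)/\log x$, to which~\eqref{eq:c1} applies once more. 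Concretely, for $t\in[T,2T]$ not equal to an ordinate of a zero (a full--measure set) formula~\eqref{eq:G} is legitimate (since $4\le x\le T^2\le t^2$) and gives $P(t)=-\operatorname{Im}\log\zeta(1/2+it)+R(t)+O\big(1/(t(\log x)^2)\big)$, where $R(t)=\sum_\gamma\sin((t-\gamma)\log x)\int_0^\infty\frac{u}{u^2+((t-\gamma)\log x)^2}\frac{du}{\sinh u}$. As $1/(t(\log x)^2)\le 1/(T(\log 4)^2)\to0$, exponentiating and applying Cauchy--Schwarz yields, for fixed $h$,
\begin{equation*}
 \frac1T\int_T^{2T}e^{hP(t)}\,dt\ll\Big(\frac1T\int_T^{2T}e^{-2h\operatorname{Im}\log\zeta(1/2+it)}\,dt\Big)^{\!1/2}\Big(\frac1T\int_T^{2T}e^{2hR(t)}\,dt\Big)^{\!1/2},
\end{equation*}
and~\eqref{eq:c1}, applied with $-2h$, bounds the first factor by $\ll e^{C'h^2\log\log T}$. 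It remains to establish a bound of the shape $\frac1T\int_T^{2T}e^{2hR(t)}\,dt\ll\exp\big(C|h|(\log T)/\log x+C''h^2\log\log T\big)$, the constants being allowed to depend on $h$, as the statement permits.

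For the zero--sum I would start from the elementary estimate $\big|\sin v\int_0^\infty\frac{u}{u^2+v^2}\frac{du}{\sinh u}\big|\ll(1+v^2)^{-1}$ (using $\sinh u\ge u$ for small $|v|$, and $\frac{u}{u^2+v^2}\le\frac{u}{v^2}$ with $\int_0^\infty\frac{u}{\sinh u}\,du<\infty$ for large $|v|$), which gives $|R(t)|\ll W(t):=\sum_\gamma w\big((t-\gamma)\log x\big)$ with $w(y)=(1+y^2)^{-1}$ (zeros with $\gamma<0$ contributing only $O((\log T)/T)$). Integrating by parts, $W(t)=\log x\int N(u)\,w'\big((t-u)\log x\big)\,du$; inserting the Riemann--von Mangoldt decomposition $N(u)=N_0(u)+\tfrac1\pi\operatorname{Im}\log\zeta(1/2+iu)+O(1)$ (with $N_0'(u)\asymp\log T$), the smooth part produces a main term $\asymp N_0'(t)/\log x\asymp(\log T)/\log x$ (its constant part drops since $\int w'=0$, and the linear part survives because $\int v\,w'(v)\,dv=-\pi\neq0$), while the $\operatorname{Im}\log\zeta$--part, by the oddness of $w'$, equals $\tfrac1\pi\int_0^\infty\big(\operatorname{Im}\log\zeta(1/2+i(t-\tfrac v{\log x}))-\operatorname{Im}\log\zeta(1/2+i(t+\tfrac v{\log x}))\big)w'(v)\,dv$. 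Hence
\begin{equation*}
 W(t)\ll\frac{\log T}{\log x}+\int_0^\infty\Big|\operatorname{Im}\log\zeta\Big(\tfrac12+i\big(t-\tfrac v{\log x}\big)\Big)-\operatorname{Im}\log\zeta\Big(\tfrac12+i\big(t+\tfrac v{\log x}\big)\Big)\Big|\,|w'(v)|\,dv+O(1).
\end{equation*}

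Now $\int_0^\infty|w'(v)|\,dv<\infty$, a translation of $[T,2T]$ by $O(1)$ is asymptotically harmless, and inserting $|y|^k\le k!\,h^{-k}(e^{hy}+e^{-hy})$ into~\eqref{eq:c1} and optimising in $h$ gives $\frac1T\int_T^{2T}|\operatorname{Im}\log\zeta(1/2+it)|^k\,dt\ll(Ck\log\log T)^{k/2}$ for all $k\ge1$; by Minkowski's integral inequality these combine to
\begin{equation*}
 \frac1T\int_T^{2T}|R(t)|^k\,dt\ll\Big(C\frac{\log T}{\log x}+C\sqrt{k\log\log T}\Big)^{k}\qquad(k\ge1).
\end{equation*}
Summing $\sum_{k\ge0}\frac{(2|h|)^k}{k!}\cdot\frac1T\int_T^{2T}|R(t)|^k\,dt$ — the terms with $k\le(\log T/\log x)^2/\log\log T$ contributing $\ll e^{C|h|(\log T)/\log x}$, and the remaining ones, after $k!\ge(k/e)^k$, forming a series of type $\sum_k(a/\sqrt k)^k$ with $a\ll|h|\sqrt{\log\log T}$, of total size $\ll e^{C''h^2\log\log T}$ — bounds $\frac1T\int_T^{2T}e^{2hR(t)}\,dt$ by $\ll\exp\big(C|h|(\log T)/\log x+C''h^2\log\log T\big)$. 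Substituting into the Cauchy--Schwarz inequality of the first paragraph and relabelling constants completes the proof.

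The crux is the zero--sum. A pointwise bound $R(t)=O((\log T)/\log x)$ is false — under RH the maximum of $|R(t)|$ is of order $\log T/\log\log T$, far larger than $(\log T)/\log x$ as soon as $x$ is not extremely small — so one is forced to use averaging, and the device that makes this work is the integration--by--parts identity turning $R(t)$ into a smoothed difference of $\operatorname{Im}\log\zeta$ plus an explicit main term of order $(\log T)/\log x$. Everything else (the kernel estimate, extracting moments from~\eqref{eq:c1}, Minkowski's inequality, and summing the exponential series) is routine bookkeeping; note that RH is used twice here, once in~\eqref{eq:G} and once in~\eqref{eq:c1}.
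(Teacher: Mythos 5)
Your proposal is correct in outline and shares the paper's skeleton for the first reduction — Goldston's formula~\eqref{eq:G}, Cauchy--Schwarz to separate the $\operatorname{Im}\log\zeta$ factor (handled by~\eqref{eq:c1}) from the sum over zeros, and the kernel bound $|\sin v\int_0^\infty \frac{u}{u^2+v^2}\frac{du}{\sinh u}|\ll(1+v^2)^{-1}$, which is exactly the paper's bound~\eqref{eq:1} in disguise. Where you genuinely diverge is in the treatment of the zero sum. The paper splits the zeros into $|(t-\gamma)\log x|\le 1$, $1<|t-\gamma|\le 1$ (dyadically, with $k_j=2^{j/2}$), $1<|t-\gamma|<T$ and $|t-\gamma|\ge T$, converts the counts into discrete differences of $N$ via~\eqref{eq:nz}, and controls the exponential moment directly through the iterated Cauchy--Schwarz inequality $\E[e^{h\sum_j X_j/2^j}]\le\prod_j(\E[e^{hX_j}])^{1/2^j}$; the net effect is that~\eqref{eq:c1} is only ever invoked at finitely many fixed multiples of $h$. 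You instead smooth continuously (integration by parts of $\sum_\gamma w((t-\gamma)\log x)$ against $N$), isolate the same main term $\asymp(\log T)/\log x$ from the smooth part of $N$, and then pass to polynomial moments of $S(t)$, Minkowski's integral inequality, and resummation of the exponential series. Both routes work, and yours is arguably more transparent about \emph{why} the answer has the shape $|h|(\log T)/\log x+h^2\log\log T$. Two points in your version need more care than you give them. First, \eqref{eq:c1} as stated has constants depending on $h$, so the clean claim $\frac1T\int_T^{2T}|S(t)|^k\,dt\ll(Ck\log\log T)^{k/2}$ \emph{for all} $k$ with a uniform $C$ is not literally a consequence of it: the optimization forces the auxiliary parameter to be $\asymp\sqrt{k/\log\log T}$, which is unbounded in $k$. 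This is repairable — the dominant terms of your series have $k\ll_h\max((\log T/\log x)^2/\log\log T,\,h^2\log\log T)$, for which the auxiliary parameter stays in a bounded, $h$-dependent range (and uniformity over a bounded range follows from~\eqref{eq:c1} at the endpoints, since $e^{h'y}\le e^{Hy}+e^{-Hy}$ for $|h'|\le H$), while for larger $k$ a fixed parameter $h'=8|h|$ yields a crude but summable bound — but as written the step overstates what~\eqref{eq:c1} delivers. Second, in the Minkowski step the translation of $[T,2T]$ is by $v/\log x$, not by $O(1)$; you need the decay $|w'(v)|\ll v^{-3}$ together with a trivial bound $|S(u)|\ll\log u$ to dispose of large $v$, mirroring the paper's separate treatment of $|t-\gamma|>1$. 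With those repairs your argument is a valid alternative proof.
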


\begin{proof}[Proof of Theorem~\ref{thmb}]
The proof mainly differs from the proof of Theorem~\ref{thma} and Proposition~\ref{propa} in its estimation of the remainder term. Nevertheless, we will repeat the main steps. We assume that $|z|\leq c$, where $c>1$ is an arbitrary constant and $z\in\mathbb{C}$, say $z=u-ih$ with $u,h\in\mathbb{R}$. Let $N'=\left\lfloor N/2\right\rfloor$. From the Taylor expansion $e^{iz}=e^{h+iu}=\sum_{k\leq 2N'-1}(iz)^k/k!+\theta e^h (|z|^{2N'}/(2N')!)$ with $|\theta|\leq 1$, we obtain
\begin{multline}\label{eq:ttcc}
  \frac{1}{T}\int_{T}^{2T} e^{iz\operatorname{Im}\Sigma_{f,x}
  (-t)}dt =\sum_{k\leq 2N'-1}\frac{(iz)^k}{k!}
  \frac{1}{T}\int_{T}^{2T}\left( \operatorname{Im}
  \Sigma_{f,x}(-t)\right) ^kdt\\
  + \theta\frac{c^{2N'}}{(2N')!}\frac{1}{T}\int_{T}^{2T}
  e^{h\operatorname{Im}\Sigma_{f,x}(-t)}\left( \operatorname{Im}\Sigma_{f,x}(-t)\right) ^{2N'}dt
\end{multline}
with $|\theta|\leq 1$. To continue as in the proof of Proposition~\ref{propa}, we use that under the assumptions of Proposition~\ref{prop1} we have
\begin{equation}\label{eq:versionm}
 \frac{1}{T}\int_T^{2T}(\operatorname{Im}\Sigma_{f,x}(-t))^kdt=
 \mathbb{E}\bigg[\bigg(\sum_{j=1}^{\pi(x)}\frac{\operatorname{Im}X_j}{\sqrt{p_j}}f\bigg(\frac{\log p_j}{\log x\ }\bigg)\bigg)^{k}\bigg]+\theta\frac{2D}{T}\sqrt{(\pi(x))^kk!}
\end{equation}
with $|\theta|\leq 1$. Moreover, if we replace $k$ by $2k$, the main term is bounded by $((2k)!/2^{2k}k!)(\sum_{p\leq x}1/p)^k$. These estimates follow as in the proof of Proposition~\ref{prop1} and~\eqref{eq:ism}.
Applying the Cauchy-Schwarz inequality, the absolut value of the remainder in~\eqref{eq:ttcc} can be bounded by
\begin{equation*}
 \frac{c^{2N'}}{(2N')!}\sqrt{\frac{1}{T}\int_{T}^{2T}(\operatorname{Im}\Sigma_{f,x}(-t))^{4N'}dt}
 \sqrt{\frac{1}{T}\int_{T}^{2T}e^{2h\operatorname{Im}\Sigma_{f,x}(-t)}dt}.
\end{equation*}
For $x\geq 2$, we have $|\operatorname{Im}\Sigma_{f,x}(-t)-\operatorname{Im}\Sigma^*_{f,x}(-t)|\leq(\log\log x)/2+O(1)$, say $\leq CN$ for $T$ sufficiently large. Hence, by~\eqref{eq:versionm} and Proposition~\ref{propc}, the above is
\begin{equation*}
 O\left(\frac{c^{2N'}}{(2N')!}\sqrt{\frac{(4N')!(\sum_{p\leq x}1/p)^{2N'}}{2^{4N'}(2N')!}+\frac{\sqrt{(4N')!(\pi(x))^{4N'}}}{T}}\sqrt{e^{4CcN+4C'c^2N}}\right)
\end{equation*} 
for $T$ sufficiently large.
Applying $(4N')!/(2^{4N'}(2N')!)\leq (2N')!$, $\sqrt{(2N')!}\geq (2N'/e)^{N'}$, $\pi(x)\leq 2x/\log x$, and $N'=\left\lfloor N/2\right\rfloor$, there exists a constant $c''>0$ (depending on $c$, $C$, and $C'$) such that this is 
\begin{equation*}
 O\left(\left(\frac{c''\sum_{p\leq x}1/p}{N'}\right)^{N'}+\left(\frac{c''}{\log x}\right)^{N/2}\right).
\end{equation*}
Since $N'/\sum_{p\leq x}1/p$ and $\log x$ go to infinity, this is $o(\exp(-c^2(\log\log x)/4))$. Hence, applying~\eqref{eq:versionm} to the other terms,~\eqref{eq:ttcc} is equal to
\begin{equation*}
\sum_{k\leq 2N'-1}\frac{(iz)^k}{k!}\mathbb{E}\bigg[\bigg(\sum_{j=1}^{\pi(x)}\frac{\operatorname{Im}X_j}{\sqrt{p_j}}f\bigg(\frac{\log p_j}{\log x\ }\bigg)\bigg)^{k}\bigg]+o(e^{-c^2(\log\log T)/4})
\end{equation*}
uniformly for $|z|\leq c$.
If we replace $\sin(t\log p)$ by $\operatorname{Im}X_i$ and integration by expectation in~\eqref{eq:ttcc}, we can bound the resulting remainder as above. In doing so, we apply~\eqref{eq:modconv} instead of Proposition~\ref{propc}. The result is that
\begin{equation*}
 \frac{1}{T}\int_{T}^{2T} e^{iz\operatorname{Im}\Sigma_{f,x}(-t)}dt=\mathbb{E}\bigg[e^{iz\sum_{j=1}^{\pi(x)}\frac{\operatorname{Im}X_j}{\sqrt{p_j}}f\left( \frac{\log p_j}{\log x\ }\right) }\bigg]\\+o(e^{-c^2(\log\log T)/4})
\end{equation*}
uniformly for $|z|\leq c$. If we multiply both sides by $\exp(z^2(\log\log x+\gamma_f)/4)$ and then apply the formula
\begin{equation}\label{eq:modconv}
  e^{z^2(\log\log x+\gamma_f)/4}\mathbb{E}  
  \Big[e^{iz\sum_{j=1}^{\pi(x)}\frac{\operatorname{Im}X_j}{\sqrt{p_j}}
  f\left(\frac{\log p_j}{\log x\ }\right)}\Big]\rightarrow 
  \Phi(z)\ \ \ \ \ \ as\ x\rightarrow\infty
\end{equation}
locally uniformly for $z\in\mathbb{C}$, the statement of Theorem~\ref{thmb} follows by the same argument as in the proof of Theorem~\ref{thma}. ~\eqref{eq:modconv} follows as in the proof of Proposition~\ref{prop2} by using the additional fact, that
\begin{equation*}
\prod_{p\leq x}\bigg(1-\frac{1}{p}f^2\bigg(\frac{\log p}
{\log x }\bigg)\bigg)^{-z^2/4}J_0\bigg(\frac{z}{\sqrt{p}}f\bigg(\frac{\log p}{\log x }\bigg)\bigg)\rightarrow\Phi(z)\ \ \ as\ x\rightarrow\infty
\end{equation*}
locally uniformly for $z\in\mathbb{C}$. We conclude by a brief argument why this holds. Split the product in $p\leq y$ and $y<p\leq x$. The product over $y<p\leq x$ converges locally uniformly to $1$ if $y\rightarrow\infty$, while one can show that the product over $p\leq y$, say $y=\log x$, converges locally uniformly to $\phi(z)$, by using, e.g., $f^2(\log p/\log x)-1=O((\log\log x)/\log x)$ if $p\leq \log x$. This completes the proof.
\end{proof}

\begin{proof}[Proof of Proposition~\ref{propc}] From the formulas \eqref{eq:G},~\eqref{eq:c1} and the Cauchy-Schwarz inequality, we obtain
\begin{multline*}
  \frac{1}{T}\int_T^{2T} e^{h\sum_{n\leq x}\frac{\Lambda(n)}{\log n}\frac{\sin(t\log n)}{\sqrt{n}}f
  \left(\frac{\log n}{\log x}\right)}dt\\
  \leq C'''e^{2C'h^2\log\log T}\sqrt{\frac{1}{T}\int_T^{2T} e^{2h\sum_{\gamma}\sin((t-\gamma)\log x)
  \int_0^\infty\frac{u}{u^2+((t-\gamma)\log x)^2}\frac{du}{\sinh u}}dt}
\end{multline*}
where $C'''$ is a constant. The absolute value of the sum over zeros is bounded by a constant times
\begin{equation}\label{eq:1}
 \sum_{|(t-\gamma)\log x|\leq 1}1+\sum_{|(t-\gamma)\log x|>1}\frac{1}{((t-\gamma)\log x)^2}
\end{equation}
and therefore it suffices to deal with the exponential moments of~\eqref{eq:1} with $h\geq 0$. Using the Cauchy-Schwarz inequality again, we obtain
\begin{multline*}
 \frac{1}{T}\int_T^{2T} e^{h\sum_{|(t-\gamma)\log x|\leq 1}1+h\sum_{|(t-\gamma)\log x|>1}\frac{1}{((t-\gamma)\log x)^2}}dt\\
 \leq\sqrt{\frac{1}{T}\int_T^{2T} e^{2h\sum_{|(t-\gamma)\log x|\leq 1} 1}dt}
 \sqrt{\frac{1}{T}\int_T^{2T} e^{2h\sum_{|(t-\gamma)\log x|>1}\frac{1}{((t-\gamma)\log x)^2}}dt}.
\end{multline*}
We start with the first term, using the following fact on the number of zeros (see \cite[(1) of Ch. 15]{D})
\begin{equation}\label{eq:nz}
 N(t)=\frac{t}{2\pi}\log\frac{t}{2\pi}-\frac{t}{2\pi}+\frac{7}{8}+S(t)+O(1/t),
\end{equation}
where $t\neq\gamma$ and $S(t)=(1/\pi)\operatorname{Im}\log\zeta(1/2+it)$. We compute (note that $h\geq 0$)
\begin{align}
 \frac{1}{T}\int_T^{2T} &e^{h\sum_{|(t-\gamma)\log x|\leq 1} 1}dt\tag*{} \\
 &=\frac{1}{T}\int_T^{2T} e^{h\left(N\left(t+\frac{1}{\log x}\right)
 -N\left(t-\frac{1}{\log x}\right)\right)}dt\tag*{}\\
 &\leq\frac{1}{T}\int_T^{2T} e^{Ch\frac{\log t}{\log x}+h\left(S\left(t+\frac{1}{\log x}\right)-S\left(t-\frac{1}{\log x}\right)\right)}dt \tag*{}\\
 &\leq e^{Ch\frac{\log T}{\log x}}\sqrt{\frac{1}{T}\int_T^{2T} e^{2hS\left(t+\frac{1}{\log x}\right)}dt}
 \sqrt{\frac{1}{T}\int_T^{2T} e^{-2hS\left(t-\frac{1}{\log x}\right)}dt}\tag*{}\\
 &=O(e^{Ch\frac{\log T}{\log x}+4C'(h/\pi)^2\log\log T})\label{eq:www}.
\end{align}
In the last step we used~\eqref{eq:c1}.
Next, we divide the sum over $|(t-\gamma)\log x|>1$ into $|t-\gamma|\geq T$, $1<|t-\gamma|<T$, and $1/\log x<|t-\gamma|\leq 1$. 

For $t\in[T,2T]$, we have
\begin{equation*}
 \sum_{|t-\gamma|\geq T}\frac{1}{((t-\gamma)\log x)^2}=O\bigg(\sum_\gamma\frac{1}{\gamma^2(\log x)^2}\bigg)=O\bigg(\frac{1}{(\log x)^2}\bigg).
\end{equation*}
The last step results from \cite[(4) of Ch. 12]{D}.
For the second sum we use the fact that $N(t+1)-N(t)=O(1+\log^+ |t|)$ (see \cite[(2) of Ch. 15]{D}). For $t\in[T,2T]$, we obtain
\begin{align*}
 &\sum_{1<|t-\gamma|<T}\frac{1}{((t-\gamma)\log x)^2}\\
 &\ \ \ \leq\sum_{k=1}^{\left\lceil T\right\rceil-1}\frac{N(t+k+1)-N(t+k)}{k^2(\log x)^2}
 + \sum_{k=1}^{\left\lceil T\right\rceil-1}\frac{N(t-k)-N(t-k-1)}{k^2(\log x)^2}\\
 &\ \ \ =O\bigg(\sum_{k=1}^{\left\lceil T\right\rceil-1}\frac{\log T}{k^2(\log x)^2}\bigg)=O\bigg(\frac{\log T}{(\log x)^2}\bigg).
\end{align*}
Next, we consider the sum over $1/\log x<\gamma-t\leq 1$. 
We have
\begin{equation}\label{eq:2}
 \sum_{1/\log x<\gamma-t\leq 1}\frac{1}{((t-\gamma)\log x)^2}\leq\sum_{j=1}^{M}\frac{N\left(t+\frac{k_j}{\log x}\right)-
 N\left(t+\frac{k_{j-1}}{\log x}\right)}{k_{j-1}^2}
\end{equation}
where $1=k_0<k_1<\cdots{}<k_M$ with $k_{M-1}<\log x\leq k_M$. By~\eqref{eq:nz}, this is bounded by, recall $t\in[T,2T]$,
\begin{equation*}
 \sum_{j=1}^{M}\left(\frac{C(k_j-k_{j-1})\log T}{(\log x)k_{j-1}^2}+\frac{S\left(t+\frac{k_j}{\log x}\right)
 -S\left(t+\frac{k_{j-1}}{\log x}\right)}{k_{j-1}^2}\right).
\end{equation*}
We choose $k_j=2^{j/2}$ and bound the left hand side of~\eqref{eq:2} by 
\begin{equation*}
 \sqrt{2}C\frac{\log T}{\log x}+\sum_{j=1}^{M}\frac{S\left(t+\frac{2^{j/2}}{\log x}\right)-S\left(t+\frac{2^{(j-1)/2}}{\log x}\right)}{2^{j-1}}.
\end{equation*}
It follows that 
\begin{align*}
 \frac{1}{T}&\int_T^{2T} e^{h\sum_{1/\log x<\gamma-t\leq 1}\frac{1}{((t-\gamma)\log x)^2}}dt\\
 &\leq e^{\sqrt{2}Ch\frac{\log T}{\log x}}\frac{1}{T}\int_T^{2T} e^{h\sum_{j=1}^{M}\frac{1}{2^{j-1}}\left(S\left(t+\frac{2^{j/2}}{\log x}\right)
 -S\left(t+\frac{2^{(j-1)/2}}
 {\log x}\right)\right)}dt.\hspace{1cm}
\end{align*}
Using $\mathbb{E}[e^{h\sum_{j=1}^MX_j/2^j}]\leq\prod_{j=1}^{M}(\mathbb{E}[e^{hX_j}])^{1/2^j}$, which follows from repeated application of the Cauchy-Schwarz inequality, this is
\begin{equation*}
\leq e^{\sqrt{2}Ch\frac{\log T}{\log x}}\prod_{j=1}^M\left(\frac{1}{T}\int_T^{2T} e^{2h\left(S\left(t+\frac{2^{j/2}}{\log x}\right)-S\left(t+\frac{2^{(j-1)/2}}{\log x}\right)\right)}dt\right)^{1/2^j}.
\end{equation*}
Applying again the Cauchy-Schwarz inequality and then~\eqref{eq:c1} (as in~\eqref{eq:www}), this is
\begin{equation*}
O(e^{\sqrt{2}Ch\frac{\log T}{\log x}}e^{16C'(h/\pi)^2\log\log T}).
\end{equation*}
The same bound is true for the sum over $1/\log x<t-\gamma\leq 1$. The claim now follows from putting together all these estimates.
\end{proof}

\section{Proof of Corollary~\ref{cor1}}
Let $T$, $c$, $c'$, $x$, and $N$ be as in Proposition~\ref{propa}, $T\geq 3$ sufficiently large such that $x\geq 2$ and $N\geq 2$. Assume further that $c'>4$ is a constant such that the bound $(\log\log T)^{1/2}(c'/4)^{-N/2}=O(1/\log T)$ holds and that $T$ is so big that the bound $(\log T)(2c^2/\log x)^{N/2}=O(1/\log T)$ holds, too. Then we show that  
\begin{align} \label{eq:st}
  &\frac{1}{T}\int_{T}^{2T} e^{iu\operatorname{Im}\log\zeta(1/2+it)}dt=\prod_{p\leq x}J_0\Big(\frac{u}{\sqrt{p}}\Big)\nonumber\\
  &-\sum_{\substack{p\leq x\\k\geq 3\ odd}}\frac{u}{k\sqrt{p}^k}J_k\Big(\frac{u}{\sqrt{p}}\Big)
 \prod_{\substack{q\leq x\\ q\neq p}}J_0\Big(\frac{u}{\sqrt{q}}\Big)
  +u^2O(\log\log\log T)+O(1/\log T)
\end{align} 
uniformly for $|u|\leq c$, $u\in\mathbb{R}$. One can deduce Corollary~\ref{cor1} from~\eqref{eq:st} as follows. Replace $u$ by $v/\sqrt{(\log\log T)/2}$ with $|v|\leq\sqrt{\log\log T/\log\log\log T}$ and let $T$ be sufficiently large. Then, by~\eqref{eq:infs}, the formula $\sum_{p\leq x}1/p=\log\log x+c_1+O(1/\log x)$, and $\log\log x/\log\log T=1+O(\log\log\log T/\log\log T)$, the first term on the right hand side of~\eqref{eq:st} is equal to
\begin{equation*}
\exp\bigg(\sum_{p\leq x}\log J_0(v/\sqrt{p(\log\log T)/2})\bigg)=e^{-v^2/2}\bigg(1+v^2O\bigg(\frac{\log\log\log T}{\log\log T}\bigg)\bigg)
\end{equation*}
and, by using $|J_k(u)|\leq (|u|/2)^{k}/k!$ and $|J_0(u)|\leq 1$, $u\in\mathbb{R}$, the second term is $v^4O(1/(\log\log T)^2)$ which is smaller than $v^2O(\log\log\log T/\log\log T)$. 

Hence, it remains to prove~\eqref{eq:st}. From $\operatorname{Im}\log\zeta(1/2+it)=\operatorname{Im}\Sigma_{1,x}(t)+\operatorname{Im}r_{1,x}(t)$ and Taylor's theorem, we obtain
\begin{multline*}
 \frac{1}{T}\int_{T}^{2T} e^{iu\operatorname{Im}\log\zeta(1/2+it)}dt=\frac{1}{T}\int_T^{2T}e^{iu\operatorname{Im}\Sigma_{1,x}(t)}dt\\+iu\frac{1}{T}\int_T^{2T}\operatorname{Im}r_{1,x}(t)e^{iu\operatorname{Im}\Sigma_{1,x}(t)}dt+\theta\frac{u^2}{2}\frac{1}{T}\int_T^{2T}(\operatorname{Im}r_{1,x}(t))^2dt
\end{multline*}
with $|\theta|\leq 1$. By Proposition~\ref{propa} and the above assumptions, the first term is equal to $\prod_{p\leq x}J_0(u/\sqrt{p})$ $+O(1/\log T)$ and by \cite[Corollary of Theorem 5.1]{T}, the third term is $u^2O(\log\log\log T)$. It remains to consider the second term. We start showing that
\begin{multline}\label{eq:ims}
 \frac{1}{T}\int_{T}^{2T}\operatorname{Im}\log\zeta(1/2+it)e^{iu\operatorname{Im}\Sigma_{1,x}(t)}dt
 \\=\sum_{\substack{p\leq x\\k\geq 1\ odd}}\frac{i}{k\sqrt{p}^k}J_k\Big(\frac{u}{\sqrt{p}}\Big)
 \prod_{\substack{q\leq x\\ q\neq p}}
 J_0\Big(\frac{u}{\sqrt{q}}\Big)+O(1/\log T)
\end{multline}
uniformly for \(|u|\leq c\). Let $N'=\left\lfloor N/2\right\rfloor$. From the Taylor expansion $e^{iu}=\sum_{k\leq 2N'-1}(iu)^k/k!+$ $\theta u^{2N'}/(2N')!$, $u\in\mathbb{R}$, with $|\theta|\leq 1$, we obtain that the left hand side of~\eqref{eq:ims} is equal to
\begin{multline}\label{eq:ttt}
 \sum_{k\leq 2N'-1}\frac{(iu)^k}{k!}\frac{1}{T}\int_{T}^{2T}\operatorname{Im}\log\zeta(1/2+it)
 (\operatorname{Im}\Sigma_{1,x}(t))^kdt\\  
 +\theta\frac{c^{2N'}}{(2N')!}\frac{1}{T}\int_{T}^{2T}|\operatorname{Im}\log\zeta(1/2+it)|(\operatorname{Im}\Sigma_{1,x}(t))^{2N'}dt
\end{multline}
with \(|\theta|\leq 1\). Applying the Cauchy-Schwarz inequality, the estimates in the proof of Proposition~\ref{propa}, and \cite[Theorem 3]{S2}, i.e. $(1/T)\int_T^{2T}(\operatorname{Im}\log\zeta(1/2+it))^2dt=(\log\log T)/2+O(1)$, the remainder is $O((\log\log T)^{1/2}((c'/4)^{-N/2}+(2c^2/\log x)^{N/2})=O(1/\log T) $. The remaining moments can be computed by using the following lemma which is a modification of \cite[Lemma 5]{S} and \cite[equation (6.3)]{G} and serves as a substitute for the mean value theorem of Montgomery and Vaughan in Section~\ref{moments}.
\begin{lemma} Assume RH. Let \(k,h\leq T\) be two positive integers with \((k,h)=1\). Then
\begin{align}
  \hspace{6mm}\int_T^{2T}\log\zeta(1/2+it)\Big(\frac{k}{h}\Big)^{it}dt=
  \frac{T\Lambda(k)}{\sqrt{k}\log k}
  +&O(\sqrt{kh}\log T),\ h=1\nonumber\\
   &O(\sqrt{kh}\log T),\ h\neq1, \nonumber\\
  \int_T^{2T}\operatorname{Im}\log\zeta(1/2+it)\Big(\frac{k}{h}\Big)^{it}dt=\frac{-iT\Lambda(k)}{2\sqrt{k}\log k}
  +&O(\sqrt{kh}\log T),\ h=1\label{eq:eqtsg}\\
    =\frac{iT\Lambda(h)}{2\sqrt{h}\log h}
  +&O(\sqrt{kh}\log T),\ k=1\nonumber\\
   &O(\sqrt{kh}\log T),\ h,k\neq1. \nonumber
\end{align}
\end{lemma}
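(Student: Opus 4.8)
The plan is to prove the two displayed formulas by reducing everything to a single contour-integral estimate for $\int_T^{2T}\log\zeta(1/2+it)(k/h)^{it}\,dt$ and its real/imaginary parts, following Selberg's classical method (his \cite[Lemma 5]{S}) with the bookkeeping of Goldston \cite[equation (6.3)]{G}. Since $\operatorname{Im}\log\zeta(1/2+it)=(\log\zeta(1/2+it)-\overline{\log\zeta(1/2+it)})/(2i)$ on the critical line, the two imaginary-part identities follow immediately from the first identity together with its complex conjugate (conjugation swaps the roles of $k$ and $h$, since $\overline{(k/h)^{it}}=(h/k)^{it}$), so the real work is entirely in the first line. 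Thus the first step I would carry out is to record this reduction and then concentrate on $\int_T^{2T}\log\zeta(1/2+it)(k/h)^{it}\,dt$.

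For that integral, first I would assume RH so that $\log\zeta(s)$ is analytic and of controlled growth in the half-plane $\operatorname{Re}s>1/2$ away from the zeros; more precisely, under RH one has the bound $\log\zeta(\sigma+it)\ll (\log t)^{2(1-\sigma)}/\log\log t$ uniformly for $1/2+1/\log\log t\le\sigma\le 2$, say (this is standard, cf. Titchmarsh/\cite{D}). The idea is to move the line of integration: write the integral over the segment from $1/2+iT$ to $1/2+2iT$ and push it to the vertical line $\operatorname{Re}s=\sigma_0$ with $\sigma_0$ slightly bigger than $1$ (or at distance about $1/\log T$ from the line, whichever is cleaner), picking up the horizontal connecting segments at heights $T$ and $2T$ as error terms. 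On the shifted line one expands the Dirichlet series $\log\zeta(s)=\sum_{n\ge 2}\Lambda(n)/(n^s\log n)$, multiplies by $(k/h)^{it}$ where $s=\sigma_0+it$, and integrates term by term: the term $n$ contributes $\int_T^{2T}(k/(nh))^{it}n^{-\sigma_0}/\log n\,dt$, which is $O(n^{-\sigma_0}/(\log n\cdot|\log(k/(nh))|))$ unless $nh=k$ (resp.\ $n=kh$ after the conjugate symmetry, but here in the stated form one wants $nh=k$), in which case it contributes a main term $T\Lambda(k/h)/((k/h)^{\sigma_0}\log(k/h))$ — and here one uses $(k,h)=1$ to see that $nh=k$ forces $h=1$, $n=k$, explaining why the main term $T\Lambda(k)/(\sqrt k\log k)$ appears only in the case $h=1$. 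Letting $\sigma_0\to 1/2^+$ (or estimating the residual more carefully at distance $1/\log T$) converts $(k/h)^{-\sigma_0}$ into $(k/h)^{-1/2}=\sqrt{h/k}$, and here since $h=1$ this is $1/\sqrt k$; one has to check that the off-diagonal tail $\sum_{n\ne k/h}$ and the horizontal-segment contributions are all $O(\sqrt{kh}\log T)$.

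The main obstacle, and the place where care is needed, is controlling the contribution of the region near the critical line where $\log\zeta$ can be large: one cannot actually integrate on $\operatorname{Re}s=1/2$ term by term, so the standard device is to shift only to $\operatorname{Re}s = 1/2 + \delta$ with $\delta \asymp 1/\log T$, use the RH-conditional pointwise bound there to control the Dirichlet-series tail, and then separately estimate the difference between the integral on $\operatorname{Re}s=1/2$ and on $\operatorname{Re}s=1/2+\delta$. That difference is handled by the two horizontal segments at $t=T$ and $t=2T$, on which one needs $\int_{1/2}^{1/2+\delta}|\log\zeta(\sigma+iT)|\,d\sigma\ll \log T$; obtaining such a bound requires choosing $T$ (and $2T$) to avoid being too close to a zero, which one can arrange by adjusting the endpoints by $O(1)$, at the cost of only an $O(\log T)$ error in the lengths — this is exactly the kind of maneuver in \cite[Lemma 5]{S}. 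The constraint $k,h\le T$ enters to guarantee that $\sqrt{kh}\log T$ genuinely dominates the accumulated off-diagonal error $\sum_{n\ne k/h}n^{-1/2-\delta}/(\log n\,|\log(nh/k)|)$, since the worst terms are those with $nh$ close to $k$, contributing $O(\sqrt{k}\log k)\subseteq O(\sqrt{kh}\log T)$. Assembling the shifted-line main term, the off-diagonal tail, and the horizontal-segment errors yields the first identity, and conjugation delivers the rest.
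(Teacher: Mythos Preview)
The paper does not actually prove this lemma; it is stated without proof and described only as ``a modification of \cite[Lemma 5]{S} and \cite[equation (6.3)]{G}''. Your outline is precisely the Selberg--Goldston argument being invoked: shift the contour slightly to the right of the critical line, expand $\log\zeta$ as a Dirichlet series, isolate the diagonal term $n=k$ (which forces $h=1$ by the coprimality hypothesis) as the main contribution, bound the off-diagonal tail and the horizontal segments by $O(\sqrt{kh}\log T)$ using the RH-conditional pointwise bound on $\log\zeta$, and then obtain the $\operatorname{Im}$-formulas by the conjugation symmetry you describe. There is therefore nothing in the paper to compare against beyond the references you already identified, and your plan matches them.
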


Denote by \(p_1,p_2,\dots,p_n\) the prime numbers not exceeding \(x\), and let $X_1,X_2,\dots$ be an i.i.d. sequence of random variables uniformly distributed on the unit circle. Furthermore, let $k,h\leq T$ be positive integers with $k/h=p_1^{-k_1}\cdots p_n^{-k_n}$. Then~\eqref{eq:eqtsg} can be written as
\begin{align}
 &\frac{1}{T}\int_T^{2T}\operatorname{Im}\log\zeta(1/2+it)(p_1^{-k_1}\cdots p_n^{-k_n})^{it}dt\label{eq:lem}\\
 =&\mathbb{E}\bigg[ -\sum_{j=1}^n\operatorname{Im}\log(1-X_j/\sqrt{p_j})X_1^{k_1}\cdots X_n^{k_n}\bigg]+O\Big(\frac{1}{T}\sqrt{p_1^{|k_1|}\cdots p_n^{|k_n|}}\log T\Big)\nonumber.
\end{align}
Expanding $(\operatorname{Im}\Sigma_{1,x}(t))^k$ as in~\eqref{eq:binth} and~\eqref{eq:multth}, we deduce from~\eqref{eq:lem} that 
\begin{multline*}
\frac{1}{T}\int_{T}^{2T}\operatorname{Im}\log\zeta(1/2+it)
 (\operatorname{Im}\Sigma_{1,x}(t))^kdt\\
 =\mathbb{E}\bigg[\bigg( -\sum_{j=1}^n\operatorname{Im}\log(1-X_j/\sqrt{p_j})\bigg)
\bigg(\sum_{j=1}^n\frac{\operatorname{Im}X_j}{\sqrt{p_j}}\bigg)^k\bigg]\\
+O\bigg(\frac{\log T}{2^{k}T}\sum_{l=0}^{k}\binom{k}{l}
  \sum_{\lambda_1+\cdots+\lambda_n=l}\frac{l!}{\lambda_1!\cdots\lambda_n!}\sum_{\lambda_1+\cdots+\lambda_n=k-l}\frac{(k-l)!}{\lambda_1!\cdots\lambda_n!}\bigg).
\end{multline*}
The remainder is $O((\log T) n^k/T)$ and the resulting remainders in~\eqref{eq:ttt}, $k\leq 2N'-1$, add up to 
$O((\log T)(2c/\log x)^{N})=O(1/\log T)$. Hence,~\eqref{eq:ttt} is equal to
\begin{multline}\label{eq:bilog}
 \sum_{k\leq 2N'-1}\frac{(iu)^k}{k!}\mathbb{E}\bigg[\bigg( -\sum_{j=1}^n\operatorname{Im}\log(1-X_j/\sqrt{p_j})\bigg)
\bigg(\sum_{j=1}^n\frac{\operatorname{Im}X_j}{\sqrt{p_j}}\bigg)^k\bigg]
+O(1/\log T)\\
=\mathbb{E}\bigg[\bigg( -\sum_{j=1}^n\operatorname{Im}\log(1-X_j/\sqrt{p_j})\bigg)
e^{iu\sum_{j=1}^n\frac{\operatorname{Im}X_j}{\sqrt{p_j}}}\bigg]\\
+ \theta\frac{c^{2N'}}{(2N')!}\mathbb{E}\bigg[\Big| -\sum_{j=1}^n\operatorname{Im}\log(1-X_j/\sqrt{p_j})\Big|
\bigg(\sum_{j=1}^n\frac{\operatorname{Im}X_j}{\sqrt{p_j}}\bigg)^{2N'}\bigg]+O(1/\log T).
\end{multline}
The last equality follows from applying Taylor's theorem as in~\eqref{eq:ttt}. If one treats the first remainder in the last row as the corresponding one in~\eqref{eq:ttt}, using $\mathbb{E}[(\sum_{j=1}^{\pi(x)}\operatorname{Im}\log(1-X_j/\sqrt{p_j}))^2]=(\log\log x)/2+O(1)$ this time, one can show that it is also $O(1/\log T)$. By plugging in~\eqref{eq:bf} and expanding the logarithm, we obtain that~\eqref{eq:bilog} is equal to
\begin{equation*}
\sum_{\substack{p\leq x\\k\geq1\ odd}}\frac{i}{k\sqrt{p}^k}J_k\Big(\frac{u}{\sqrt{p}}\Big)
 \prod_{\substack{q\leq x\\ q\neq p}}J_0\Big(\frac{u}{\sqrt{q}}\Big)+O(1/\log T)
\end{equation*}
which completes the proof of~\eqref{eq:ims}. The last step in the proof of~\eqref{eq:st} is to show that
\begin{align*}
 \frac{1}{T}\int_{T}^{2T}\operatorname{Im}&\Sigma_{1,x}(t)e^{iu\operatorname{Im}\Sigma_{1,x}(t)}dt\\&
 =\mathbb{E}\bigg[\bigg(\sum_{j=1}^{n}\frac{\operatorname{Im}X_j}
 {\sqrt{p_j}}\bigg)e^{iu\sum_{j=1}^{n}\frac{\operatorname{Im}X_j}
 {\sqrt{p_j}}}\bigg]+O(1/\log T)\\
 &=\sum_{p\leq x}\frac{i}{\sqrt{p}}J_1\Big(\frac{u}{\sqrt{p}}\Big)\prod_{\substack{q\leq x\\ q\neq p}}J_0\Big(\frac{u}{\sqrt{q}}\Big)+O(1/\log T)
\end{align*}
uniformly for \(|u|\leq c\). The first equality follows as above or as in the proof of Proposition~\ref{prop1}, the second equality again by plugging in~\eqref{eq:bf}. This completes the proof.\qed

\section{Proof of Corollary~\ref{cor2} and~\ref{cor3}} 

\begin{proof}[Proof of Corollary~\ref{cor2}]
Let $x\geq 2$ be as in Theorem~\ref{thmb} with the additional property that $N/\log\log T=O(\log\log T)$. By Theorem~\ref{thmb} and the fact that $\log\log T/\log\log x\rightarrow 1$ in this case, we obtain for each $h\in\mathbb{R}$
\begin{equation}
 \frac{1}{(\log\log T)/2}\log\bigg(\frac{1}{T}\int_T^{2T}e^{h\operatorname{Im}\Sigma_{f,x}(t)}dt\bigg)\rightarrow h^2/2\ \ \ \ \ as\ T\rightarrow\infty.
\end{equation}
By Theorem~\ref{tc1}, we obtain that
the family $(1/((\log\log T)/2))\operatorname{Im}\Sigma_{f,x}(U_T)$ satisfies the large deviation principle with the speed $1/((\log\log T)/2)$ and the rate function $I(h)=h^2/2$. Next, consider $\operatorname{Im}r_{f,x}(U_T)$. We will show that there exists a constant $C>0$ (the constant in~\eqref{eq:selb}) such that for each $\delta>0$ 
\begin{multline}\label{eq:exb}
(1/T)\lambda(\{t\in[T,2T]:|\operatorname{Im}r_{f,x}(t)|\geq C\delta\log\log T\})\\\leq e^{-(1-o(1))(\delta\log\log T)\log (\delta\log\log T)}.
\end{multline}
We postpone the proof of~\eqref{eq:exb} to the end of this section. From~\eqref{eq:exb} we deduce that for each $\delta>0$
\begin{multline*}
\frac{1}{(\log\log T)/2}\log\bigg(\frac{1}{T}\lambda(\{t\in[T,2T]:|\operatorname{Im}r_{f,x}(t)|\geq \delta \log\log T\})\bigg)\\
\leq -2(\delta/C)(1-o(1))(\log\log\log T+\log (\delta/C)).
\end{multline*}
As $T\rightarrow\infty$, the right hand side goes to $-\infty$. Hence, by Definition~\ref{exequiv}, the families $(1/((\log\log T)/2))\operatorname{Im}\log\zeta(1/2+iU_T)$ and $(1/((\log\log T)/2))\Sigma_{f,x}(U_T)$ are exponentially equivalent. To obtain the statement of the theorem, we finally apply \cite[Theorem 4.2.13]{DZ}, which states that if two families of random variables are exponentially equivalent, and one of them satisfies the large deviation principle with good rate function $I$, then the same large deviation principle holds for the other family. 

It remains to show~\eqref{eq:exb}. Therefore, let $V=\delta\log\log T$ and decompose
\begin{equation*}
\operatorname{Im}r_{f,x}=\operatorname{Im}(r_{g,T^{1/V}}^*
+(\Sigma_{g,T^{1/V}}^*-\Sigma_{g,T^{1/V}})
+(\Sigma_{g,T^{1/V}}-\Sigma_{g,x})
+\Sigma_{g-f,x}).
\end{equation*}
If $|\operatorname{Im}r_{f,x}(t)|\geq CV$, there exists a summand on the right hand side whose absolute value is greater or equal to $CV/4$. Applying the union bound, we obtain
\begin{align}
(1/T)&\lambda(\{t\in[T,2T]:|\operatorname{Im}r_{f,x}(t)|\geq CV\}\nonumber\\
\leq &(1/T)\lambda(\{t\in[T,2T]:|\operatorname{Im}r_{g,T^{1/V}}^*(t)|\geq CV/4\})\nonumber\\
&+(1/T)\lambda(\{t\in[T,2T]:|\operatorname{Im}\Sigma_{g,T^{1/V}}^*(t)
-\operatorname{Im}\Sigma_{g,T^{1/V}}(t)|\geq CV/4\})\nonumber\\
&+(1/T)\lambda(\{t\in[T,2T]:|\operatorname{Im}\Sigma_{g,T^{1/V}}(t)
-\operatorname{Im}\Sigma_{g,x}(t)|\geq CV/4\})\nonumber\\
&+(1/T)\lambda(\{t\in[T,2T]:|\operatorname{Im}\Sigma_{g-f,x}(t)|\geq CV/4\})\label{eq:fourte}.
\end{align}
If we choose Selberg's function $g(u)=e^{-2u}\min(1,2(1-u))$, we can apply \cite[Theorem 1]{S}, which says that, assuming RH, there exists constants $C,C'>0$ such that for $2\leq y\leq t^2$ and $t\geq 2$,
\begin{equation}\label{eq:selb}
|\operatorname{Im}r_{g,y}^*(t)|\leq\bigg|\frac{C'}{\log y}\sum_{n\leq y}\frac{\Lambda(n)}{n^{1/2+it}}g\bigg(\frac{\log n}{\log y}\bigg)\bigg|+\frac{C}{16}\frac{\log t}{\log y}.
\end{equation}
If we choose $y=T^{1/V}$ and $t\in[T,2T]$, $T\geq 2$, we have $(C/16)(\log t/\log y)\leq CV/8$. For $T\geq 2$, sufficiently large such that $2\leq T^{1/V}\leq T^2$, we obtain
\begin{multline*}
(1/T)\lambda(\{t\in[T,2T]:|\operatorname{Im}r_{g,T^{1/V}}^*(t)|\geq CV/4\})\\
 \leq\frac{1}{T}\lambda\bigg(\bigg\{t\in\left[T,2T\right]:\bigg|\frac{C'}{\log T^{1/V}}\sum_{n\leq T^{1/V}}\frac{\Lambda(n)}{n^{1/2+it}}g\bigg(\frac{\log n}{\log T^{1/V}}\bigg)\bigg|
  \geq CV/8 \bigg\}\bigg).
\end{multline*}
Now, we can apply Markov's inequality and~\eqref{eq:appmv} to bound the last term by
\begin{equation*}
\bigg(\frac{8C'}{CV}\bigg)^{2\lfloor V\rfloor}3^{2V}(2(AV)^V+O(1)^V)=e^{-(1-o(1))V\log V}.
\end{equation*}
Similarly, by using the other bounds in Appendix~\ref{AB}, we can bound the three other terms in~\eqref{eq:fourte} by $\exp(-(1-o(1))V\log V)$. Hence,~\eqref{eq:exb} follows. This completes the proof.
\end{proof}

\begin{proof}[Proof of Corollary~\ref{cor3}] The asserted formula is exactly content of Varadhan's integral lemma (see Theorem~\ref{tc2}). The assumptions of the theorem are satisfied by Corollary~\ref{cor2} and equation~\eqref{eq:c1}.
\end{proof}

\begin{appendix}
\section{Selberg's result}
In this appendix we briefly discuss Selberg's result about the rate of convergence in the central limit theorem of $\operatorname{Im}\log\zeta(1/2+it)$ (see \cite[Theorem 2]{S3} and \cite[Theorem 6.2]{T}). From Theorem~\ref{thma} we deduce:
  
\begin{lemma} Let $x=e^{\log T/N}$ and $N$ such that $x\rightarrow\infty$ and $N/\log\log T\rightarrow\infty$ as $T\rightarrow\infty$. Suppose further that $N/\log\log T=O(\log\log T)$. Then
\begin{multline}\label{eq:cltbe}\sup_{a<b}\Bigg(\ \frac{1}{T}\lambda\Big(\Big\{t\in[T,2T]:\frac{1}{\sqrt{(\log\log x +\gamma)/2}}\sum_{p\leq x}\frac{\sin(t\log p)}{\sqrt{p}}\in [a,b]\Big\}\Big)\\
-\int_a^be^{-t^2/2}\frac{dt}{\sqrt{2\pi}}\ \Bigg)=O(1/\sqrt{\log\log T}).
\end{multline}
\end{lemma}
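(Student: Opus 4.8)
The plan is to deduce the Berry--Esseen-type bound \eqref{eq:cltbe} from the mod-Gaussian convergence in Theorem~\ref{thma} (more precisely, from the quantitative version supplied by Proposition~\ref{propa}) via a smoothing inequality. First I would set $\sigma^2=(\log\log x+\gamma)/2$ and $S_x(t)=\sigma^{-1}\sum_{p\le x}\sin(t\log p)/\sqrt p$, so that the characteristic function of $S_x(t)$ (with $t$ uniform on $[T,2T]$) is
\[
\phi_T(u)=\frac1T\int_T^{2T}e^{iuS_x(t)}\,dt=\frac1T\int_T^{2T}e^{i(u/\sigma)\sum_{p\le x}\sin(t\log p)/\sqrt p}\,dt.
\]
By Proposition~\ref{propa} applied with the rescaled parameter $u/\sigma$ (which is $\le c/\sigma$, hence eventually $\le$ any fixed constant), together with $\prod_{p\le x}J_0(v/\sqrt p)=\exp(\sum_{p\le x}\log J_0(v/\sqrt p))$ and the expansion $\log J_0(w)=-w^2/4+O(w^4)$, one gets, uniformly for $|u|\le A\sqrt{\log\log T}$ with $A$ a suitable small constant,
\[
\phi_T(u)=e^{-u^2/2}\bigl(1+O(u^4/\log\log T)\bigr)+O\!\bigl((1/c')^{N-1}+(2c^2/\log x)^N\bigr),
\]
using that $\sum_{p\le x}1/p=\log\log x+c_1+O(1/\log x)$ and $\log\log x/\log\log T\to1$. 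The extra hypothesis $N/\log\log T=O(\log\log T)$, i.e. $\log x\gg\log T/(\log\log T)^2$, makes $(2c^2/\log x)^N$ and $(1/c')^{N-1}$ super-polynomially small, in particular $O(1/\log\log T)$ and even much smaller; this is exactly why it is imposed.

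Next I would invoke the Berry--Esseen smoothing inequality (e.g. Esseen's lemma): for any $U>0$,
\[
\sup_{a<b}\Bigl|\,\Pr(S_x\in[a,b])-\int_a^b e^{-t^2/2}\tfrac{dt}{\sqrt{2\pi}}\,\Bigr|\le \frac{1}{\pi}\int_{-U}^{U}\frac{|\phi_T(u)-e^{-u^2/2}|}{|u|}\,du+\frac{C_0}{U}.
\]
Here one must be slightly careful because the statement concerns the sup over \emph{all} intervals $[a,b]$ rather than one-sided tails, but the two-sided form of Esseen's inequality covers this, the $C_0/U$ term coming from the maximal density $1/\sqrt{2\pi}$ of the Gaussian. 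I would then choose $U\asymp\sqrt{\log\log T}$ (precisely $U=A\sqrt{\log\log T}$ with the same $A$ as above, so that the bound on $\phi_T$ is valid on all of $[-U,U]$), giving the error term $C_0/U=O(1/\sqrt{\log\log T})$, which is the claimed order.

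It remains to bound the integral $\int_{-U}^U|\phi_T(u)-e^{-u^2/2}|/|u|\,du$. Splitting $|\phi_T(u)-e^{-u^2/2}|\le e^{-u^2/2}\cdot O(u^4/\log\log T)+O(\text{super-poly small})$, the first piece contributes $\frac{1}{\log\log T}\int_{-\infty}^\infty u^3 e^{-u^2/2}\,du=O(1/\log\log T)$, and the second piece, integrated against $du/|u|$ over $[-U,U]$ (the $1/|u|$ singularity at $0$ being harmless since $\phi_T(u)-e^{-u^2/2}=O(u^4/\log\log T)$ near $0$, so the integrand is bounded there), contributes at most $O(U\cdot(\text{super-poly small}))=o(1/\sqrt{\log\log T})$. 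Altogether the right-hand side of Esseen's inequality is $O(1/\sqrt{\log\log T})$, which is \eqref{eq:cltbe}.

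The main obstacle I anticipate is purely bookkeeping: making the error term in Proposition~\ref{propa} genuinely uniform in $u$ over the growing range $|u|\le A\sqrt{\log\log T}$ after the rescaling $u\mapsto u/\sigma$, and in particular checking that the choice of $c$ (hence of $N=(c'ec^2/4)\log\log T$) can be coordinated with the constraint $N/\log\log T=O(\log\log T)$ so that both $(1/c')^{N-1}$ and $(2c^2/\log x)^N$ stay far below $1/\sqrt{\log\log T}$ while $c'\to\infty$. Once the constants are pinned down, the Esseen smoothing step and the Gaussian moment integrals are entirely routine; no new idea beyond Theorem~\ref{thma}/Proposition~\ref{propa} is needed, which is presumably why the authors phrase this as a corollary.
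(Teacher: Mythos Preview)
Your approach is correct and essentially identical to the paper's: feed Proposition~\ref{propa} into Esseen's smoothing inequality (the paper cites the equivalent formula in Feller, XVI.3, (3.13)) with cutoff $U\asymp\sqrt{\log\log T}$, and bound the resulting characteristic-function integral. Two cosmetic remarks: the multiplicative error after expanding $\prod_{p\le x}J_0(u/(\sigma\sqrt p))$ should read $1+O(u^2/\log\log T)$ rather than $1+O(u^4/\log\log T)$, because the constant $c_1$ in $\sum_{p\le x}1/p=\log\log x+c_1+O(1/\log x)$ is the Meissel--Mertens constant, not $\gamma$, so the quadratic term does not cancel exactly against the normalization $\sigma^2=(\log\log x+\gamma)/2$; and the hypothesis $N/\log\log T=O(\log\log T)$ is needed only to ensure $\log\log x\sim\log\log T$ (so that the final bound can be written with $\log\log T$), not to make the Proposition~\ref{propa} remainders small --- those already decay under the weaker assumptions. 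Neither point affects the conclusion.
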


\begin{proof} We denote by $\Phi_n(u)$ the left hand side of~\eqref{eq:i3}. Using \cite[XVI.3, formula 3.13]{F} we can bound the left hand side of~\eqref{eq:cltbe} by
\begin{equation}\label{eq:ab}
\frac{2}{\pi}\int_{-c\sqrt{\log\log x}}^{c\sqrt{\log\log x}}\limits e^{-u^2/2}|(\Phi_n(u/\sqrt{(\log\log x +\gamma)/2})-1)/u|du+O\Big(\frac{1}{c\sqrt{\log\log x}}\Big).
\end{equation}
An inspection of the proof of Proposition~\ref{prop2} combined with~\eqref{eq:p1}  shows that $\Phi_n(u)=\Phi(u)(1+O(1/\log x))+O(1/\log T)$, $|u|\leq c$. If we choose $c>0$ such that $\Phi(u)$ has no zeros for $|u|\leq c$, we obtain $\Phi_n(u)=\Phi(u)(1+O(1/\log x))$, $|u|\leq c$. On the other hand, we have $\Phi(u/\sqrt{(\log\log x +\gamma)/2})=1+O(u^2/\log\log x)$, $|u|\leq c\sqrt{\log\log x}$. Plugging in these estimates gives that~\eqref{eq:ab} is $O(1/\sqrt{\log\log x})$. From $N/\log\log T=O(\log\log T)$ we conclude that $\log\log T/\log\log x\rightarrow 1$ and this completes the proof.
\end{proof}

This lemma combined with the bound (see \cite[Lemma 6.2]{T})
\begin{equation*}
 |\{t\in[T,2T]:|r_{1,x}(t)|\geq c'\log\log\log T\}|=O(1/\sqrt{\log\log T}),
\end{equation*}
where $c'>0$ is a constant, yields Selberg's result
\begin{multline*}
\sup_{a<b}\Bigg(\ \frac{1}{T}\lambda\Big(\Big\{t\in[T,2T]: \frac{\operatorname{Im}\log\zeta(1/2+it)}{\sqrt{(\log\log T)/2}}\in [a,b]\Big\}\Big)\\
  -\int_a^be^{-t^2/2}\frac{dt}{\sqrt{2\pi}}\ \Bigg)=O\bigg(\frac{\log\log\log T}{\sqrt{\log\log T}}\bigg).
\end{multline*}

\section{Mean value estimates}\label{AB}

For completeness we present some standard mean value estimates which we applied in the proof of Corollary~\ref{cor2} (see \cite[Lemma 3]{S} and \cite[Lemma 3]{SO}). For this purpose let $x$ and $y$ be positive real numbers, $a_p$ and $b_p$ be complex numbers with $|a_p|\leq 1$ and $|b_p|\leq \log p/\log x$, and $k$ be a nonnegative integer. By repeating the arguments in the proof of Proposition~\ref{prop1}, we obtain
\begin{align*}
 &\frac{1}{T}\int_T^{2T}\bigg|\sum_{p\leq x}{\frac{a_p}{p^{1+2it}}}\bigg|^{2k}dt
 \leq k!\Big(\sum_{p\leq x}\frac{1}{p^2}\Big)^k+2D k!(\pi(x))^k/T,\\
 &\frac{1}{T}\int_T^{2T}\bigg|\sum_{y< p\leq x}{\frac{a_p}{p^{1/2+it}}}\bigg|^{2k}dt
  \leq k!\Big(\sum_{y< p\leq x}\frac{1}{p}\Big)^k+2D k!(\pi(x)-\pi(y))^k/T\\
 &\frac{1}{T}\int_T^{2T}\bigg|\sum_{p\leq x}{\frac{b_p}{p^{1/2+it}}}\bigg|^{2k}dt
  \leq k!\frac{1}{(\log x)^k}\Big(\sum_{p\leq x}\frac{\log p}{p}\Big)^k+2D k!(\pi(x))^k/T.
\end{align*}
If $x\leq T^{1/k}$, the first and the third term are bounded by $(Ak)^k$ and the second by 
$(k(\log\log x-\log\log y+A))^k$, $A>0$ some constant. 

For example, we obtain for a function $|g(u)|\leq 1$
\begin{align}
 \frac{1}{T}\int_T^{2T}\bigg|&\frac{1}{\log T^{1/V}}\sum_{n\leq T^{1/V}}\frac{\Lambda(n)}{n^{1/2+it}}g\Big(\frac{\log n}{\log T^{1/V}}\Big)\bigg|^{2\lfloor V\rfloor}dt\nonumber\\
 &=\frac{1}{T}\int_T^{2T}\bigg|\sum_{p\leq T^{1/V}}{\frac{b_p}{p^{1/2+it}}}+\sum_{p^2\leq T^{1/V}}{\frac{a_p}{p^{1+2it}}}+O(1)\bigg|^{2\lfloor V\rfloor}dt\nonumber\\
 &\leq 3^{2V}((AV)^V+(AV)^V+O(1)^V)\label{eq:appmv}.
 \end{align}
 
\section{Large deviation theory} \label{ldt}

 In this appendix we give the definition of the large deviation principle and state two important results which we used in the proofs of Corollary~\ref{cor2} and~\ref{cor3} (see \cite{DZ}).
 
A function $I:\mathbb{R}\rightarrow[0,\infty]$ is called a rate function (resp. good rate function), if for all $\alpha\in[0,\infty)$, the sets $\{x:I(x)\leq\alpha\}$ are closed (resp. compact). A family $\{Z_\epsilon\}$ of real-valued random variables satisfies the large deviation principle with the speed $\epsilon$ and the rate function $I$, if

(a) For any closed set $F\subseteq\mathbb{R}$
\begin{equation*}
 \limsup_{\epsilon\rightarrow 0}\epsilon\log\mathbb{P}(Z_\epsilon\in F)\leq-\inf_{x\in F}I(x).
\end{equation*}

(b) For any open set $G\subseteq\mathbb{R}$
\begin{equation*}
 \liminf_{\epsilon\rightarrow 0}\epsilon\log\mathbb{P}(Z_\epsilon\in G)\geq-\inf_{x\in G}I(x).
\end{equation*}
\begin{thm}[G\"{a}rtner-Ellis, see Theorem 2.3.6 or 4.5.20 in \cite{DZ}]\label{tc1} Suppose that for each $\lambda\in\mathbb{R}$
\begin{equation*}
 \Lambda(\lambda):=\lim_{\epsilon\rightarrow 0}\epsilon\log\mathbb{E}\big[ e^{\lambda Z_{\epsilon}/\epsilon}\big]
\end{equation*}
exists and that $\Lambda$ is differentiable. Then the family $\{Z_\epsilon\}$ satisfies the large deviation principle with the good rate function $I(x)=\sup_{\lambda\in\mathbb{R}}(\lambda x-\Lambda(\lambda))$.
\end{thm}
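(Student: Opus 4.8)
The plan is to obtain the large deviation principle from the Legendre--Fenchel duality between $\Lambda$ and $I$: first prove the upper bound by exponential Chebyshev estimates, then bootstrap the lower bound from it by means of an exponential change of measure. At the outset I would record the elementary facts. Each $\lambda\mapsto\epsilon\log\mathbb{E}[e^{\lambda Z_\epsilon/\epsilon}]$ is convex by H\"older's inequality, so its pointwise limit $\Lambda$ is convex; since $\mathbb{E}[e^0]=1$ we have $\Lambda(0)=0$, whence $I(x)\geq 0\cdot x-\Lambda(0)=0$, and $I$, being a supremum of affine functions, is convex and lower semicontinuous. Because $\Lambda$ is finite on all of $\mathbb{R}$, one has $I(x)\geq M|x|-\max(\Lambda(M),\Lambda(-M))$ for every $M$, so $I(x)/|x|\to\infty$; the level sets $\{I\leq\alpha\}$ are therefore bounded, and being closed they are compact, so $I$ is a good rate function.

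\textbf{Upper bound.} It suffices to bound half-lines. For $\lambda\geq 0$, Markov's inequality gives $\mathbb{P}(Z_\epsilon\geq a)\leq e^{-\lambda a/\epsilon}\mathbb{E}[e^{\lambda Z_\epsilon/\epsilon}]$, hence $\limsup_{\epsilon\to 0}\epsilon\log\mathbb{P}(Z_\epsilon\geq a)\leq\Lambda(\lambda)-\lambda a$; optimizing over $\lambda\geq 0$ and observing that for $a\geq\Lambda'(0)$ the half-line supremum equals $\sup_{\lambda\in\mathbb{R}}(\lambda a-\Lambda(\lambda))$ (by concavity of $\lambda\mapsto\lambda a-\Lambda(\lambda)$), one obtains $\limsup\epsilon\log\mathbb{P}(Z_\epsilon\geq a)\leq-I(a)$, and symmetrically $\limsup\epsilon\log\mathbb{P}(Z_\epsilon\leq a)\leq-I(a)$ for $a\leq\Lambda'(0)$. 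For a general closed set $F$, taking $\lambda=\pm1$ in these estimates shows $\limsup\epsilon\log\mathbb{P}(|Z_\epsilon|\geq M)\to-\infty$ as $M\to\infty$, so it is enough to control $F\cap[-M,M]$; covering that compact set by finitely many short intervals $[a,b]$, using $\mathbb{P}(Z_\epsilon\in[a,b])\leq\min\{\mathbb{P}(Z_\epsilon\geq a),\mathbb{P}(Z_\epsilon\leq b)\}$, letting the intervals shrink, and invoking lower semicontinuity of $I$ gives $\limsup\epsilon\log\mathbb{P}(Z_\epsilon\in F)\leq-\inf_{x\in F}I(x)$.

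\textbf{Lower bound.} Since $\mathbb{P}(Z_\epsilon\in G)\geq\mathbb{P}(Z_\epsilon\in(y-\delta,y+\delta))$ whenever $(y-\delta,y+\delta)\subseteq G$, it suffices to prove $\liminf_{\epsilon\to 0}\epsilon\log\mathbb{P}(Z_\epsilon\in(y-\delta,y+\delta))\geq-I(y)$ for each $y$ in the range of $\Lambda'$ and each $\delta>0$, and then take suprema over such $y$ inside $G$ (boundary points of the domain of $I$ being reached by continuity of the convex function $I$). Fix such a $y$ and let $\lambda^*$ be the point with $\Lambda'(\lambda^*)=y$, so that $I(y)=\lambda^* y-\Lambda(\lambda^*)$. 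Introduce the tilted measures $d\mu_\epsilon=e^{\lambda^* Z_\epsilon/\epsilon}\,d\mathbb{P}/\mathbb{E}[e^{\lambda^* Z_\epsilon/\epsilon}]$; on $\{|Z_\epsilon-y|<\delta\}$ one has $e^{-\lambda^* Z_\epsilon/\epsilon}\geq e^{-(\lambda^* y+|\lambda^*|\delta)/\epsilon}$, so
\[
\mathbb{P}\big(Z_\epsilon\in(y-\delta,y+\delta)\big)\geq\mathbb{E}[e^{\lambda^* Z_\epsilon/\epsilon}]\,e^{-(\lambda^* y+|\lambda^*|\delta)/\epsilon}\,\mu_\epsilon\big((y-\delta,y+\delta)\big).
\]
Taking $\epsilon\log$ and letting $\epsilon\to 0$ yields $\liminf\epsilon\log\mathbb{P}(\cdot)\geq\Lambda(\lambda^*)-\lambda^* y-|\lambda^*|\delta+\liminf\epsilon\log\mu_\epsilon(\cdot)=-I(y)-|\lambda^*|\delta+\liminf\epsilon\log\mu_\epsilon(\cdot)$, and then $\delta\to 0$ finishes as soon as $\mu_\epsilon((y-\delta,y+\delta))\to 1$. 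For the latter, the $\mu_\epsilon$-scaled cumulant generating function of $Z_\epsilon$ converges to $\Lambda_{\lambda^*}(\theta):=\Lambda(\lambda^*+\theta)-\Lambda(\lambda^*)$, which is finite everywhere and differentiable at $0$ with $\Lambda_{\lambda^*}'(0)=y$; its conjugate $\Lambda_{\lambda^*}^*$ is a good rate function vanishing only at $y$ (a tangent-line argument: $\theta z-\Lambda_{\lambda^*}(\theta)>0$ for small $\theta$ of the sign of $z-y$), so applying the already-proved upper bound to $\{Z_\epsilon\}$ under $\mu_\epsilon$ forces $\mu_\epsilon(\{|Z_\epsilon-y|\geq\delta\})\to 0$ exponentially fast.

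\textbf{Main obstacle.} I expect the crux to be the last point, the concentration $\mu_\epsilon((y-\delta,y+\delta))\to 1$ of the exponentially tilted family around $y=\Lambda'(\lambda^*)$. This is exactly where the differentiability hypothesis on $\Lambda$ is indispensable, and the neat way to get it is to recycle the upper bound for the tilted measures rather than to estimate $\mu_\epsilon$ directly; a minor bookkeeping nuisance is the treatment of boundary points $y=\sup_\lambda\Lambda'(\lambda)$ or $\inf_\lambda\Lambda'(\lambda)$ of the domain of $I$, which is dealt with by approximation from the interior.
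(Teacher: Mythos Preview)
The paper does not prove this statement at all: Theorem~\ref{tc1} is stated in Appendix~\ref{ldt} purely as a quotation from the literature (Dembo--Zeitouni, Theorems~2.3.6 and~4.5.20), with no accompanying argument. It is used as a black box in the proof of Corollary~\ref{cor2}. So there is no ``paper's own proof'' to compare against.

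That said, your outline is the standard textbook proof (essentially the one in Dembo--Zeitouni): exponential Chebyshev plus exponential tightness for the upper bound, and the Cram\'er change of measure $d\mu_\epsilon\propto e^{\lambda^* Z_\epsilon/\epsilon}d\mathbb{P}$ for the lower bound, with concentration of $\mu_\epsilon$ near $y=\Lambda'(\lambda^*)$ obtained by recycling the upper bound for the tilted family. Your identification of the main obstacle is correct: the differentiability of $\Lambda$ is exactly what guarantees that the Legendre dual of the shifted function $\Lambda_{\lambda^*}(\theta)=\Lambda(\lambda^*+\theta)-\Lambda(\lambda^*)$ has its unique zero at $y$, and in one dimension with $\Lambda$ finite and differentiable on all of $\mathbb{R}$ the ``exposed points'' issue that complicates the general G\"artner--Ellis theorem does not arise (the range of $\Lambda'$ is an interval whose closure is $\{I<\infty\}$, and boundary points are handled by your approximation remark). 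One small point to tighten: in the upper bound for general closed $F$, after covering $F\cap[-M,M]$ by intervals $[a_j,b_j]$ you should also take the infimum of $I$ over the \emph{interval}, not just use $\min(I(a_j),I(b_j))$; lower semicontinuity then lets the mesh go to zero. Otherwise the sketch is sound.
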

\begin{thm}[Varadhan, see Theorem 4.3.1 in \cite{DZ}]\label{tc2} Suppose that $\{Z_\epsilon\}$ satisfies the large deviation principle with a good rate function $I$ and let $h\in\mathbb{R}$. Assume further that for some $\gamma>1$
\begin{equation}\label{eq:varc}
 \limsup_{\epsilon\rightarrow 0}\epsilon\log\mathbb{E}\big[ e^{\gamma h Z_{\epsilon}/\epsilon}\big]<\infty.
\end{equation}
Then
\begin{equation*}
 \lim_{\epsilon\rightarrow 0}\epsilon\log\mathbb{E}\big[ e^{h Z_{\epsilon}/\epsilon}\big]=\sup_{x\in\mathbb{R}}(xh-I(x)).
\end{equation*}
\end{thm}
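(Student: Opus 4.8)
The plan is to establish the two matching bounds for $\Lambda:=\sup_{x\in\mathbb{R}}(xh-I(x))$, along the classical lines of \cite{DZ}. Write $\psi(x)=hx$, a continuous function, and let $L:=\limsup_{\epsilon\to0}\epsilon\log\E[e^{\gamma hZ_\epsilon/\epsilon}]<\infty$ denote the finite constant furnished by~\eqref{eq:varc}.

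For the lower bound, fix $x_0\in\mathbb{R}$ and $\delta>0$. By continuity of $\psi$ there is an open neighbourhood $G\ni x_0$ with $\psi>\psi(x_0)-\delta$ on $G$, whence $\E[e^{hZ_\epsilon/\epsilon}]\geq e^{(\psi(x_0)-\delta)/\epsilon}\mathbb{P}(Z_\epsilon\in G)$. Applying $\epsilon\log(\cdot)$, letting $\epsilon\to0$, and using the large deviation lower bound $\liminf_{\epsilon\to0}\epsilon\log\mathbb{P}(Z_\epsilon\in G)\geq-\inf_{x\in G}I(x)\geq-I(x_0)$ gives $\liminf_{\epsilon\to0}\epsilon\log\E[e^{hZ_\epsilon/\epsilon}]\geq\psi(x_0)-\delta-I(x_0)$; letting $\delta\to0$ and taking the supremum over $x_0$ yields $\liminf_{\epsilon\to0}\epsilon\log\E[e^{hZ_\epsilon/\epsilon}]\geq\Lambda$.

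The upper bound is the delicate part, because $\psi$ is unbounded, and I would handle it by truncation. Fix $M>0$. On $\{hZ_\epsilon\le M\}$ we have $e^{hZ_\epsilon/\epsilon}=e^{(hZ_\epsilon\wedge M)/\epsilon}$, while on $\{hZ_\epsilon>M\}$, writing $e^{hZ_\epsilon/\epsilon}=e^{\gamma hZ_\epsilon/\epsilon}e^{-(\gamma-1)hZ_\epsilon/\epsilon}$ and using $\gamma>1$, we have $e^{hZ_\epsilon/\epsilon}\le e^{\gamma hZ_\epsilon/\epsilon}e^{-(\gamma-1)M/\epsilon}$; hence
\begin{equation*}
\E\big[e^{hZ_\epsilon/\epsilon}\big]\le\E\big[e^{(hZ_\epsilon\wedge M)/\epsilon}\big]+e^{-(\gamma-1)M/\epsilon}\,\E\big[e^{\gamma hZ_\epsilon/\epsilon}\big].
\end{equation*}
The $\limsup$ of $\epsilon\log$ of the last summand is at most $-(\gamma-1)M+L$. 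For the first summand I would prove the Laplace-type bound $\limsup_{\epsilon\to0}\epsilon\log\E[e^{(hZ_\epsilon\wedge M)/\epsilon}]\le\sup_x(hx\wedge M-I(x))\le\Lambda$, valid for the bounded-above continuous function $\psi_M:=\psi\wedge M$, as follows: for $\alpha>0$ split according to whether $Z_\epsilon$ lies in the sublevel set $K_\alpha=\{I\le\alpha\}$, which is compact since $I$ is good. On $K_\alpha^c\subseteq\{I\ge\alpha\}$ (a closed set, by lower semicontinuity of $I$) the crude bound $\psi_M\le M$ and the large deviation upper bound give $\limsup\epsilon\log\E[e^{\psi_M(Z_\epsilon)/\epsilon};Z_\epsilon\notin K_\alpha]\le M-\alpha$. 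On $K_\alpha$ cover by finitely many open balls $B_1,\dots,B_m$, small enough (using continuity of $\psi_M$ and lower semicontinuity of $I$) that $\sup_{\overline{B_i}}\psi_M\le\psi_M(y_i)+\delta$ and $\inf_{\overline{B_i}}I\ge I(y_i)-\delta$; then $\E[e^{\psi_M(Z_\epsilon)/\epsilon};Z_\epsilon\in K_\alpha]\le\sum_i e^{(\psi_M(y_i)+\delta)/\epsilon}\mathbb{P}(Z_\epsilon\in\overline{B_i})$, and the large deviation upper bound on each $\overline{B_i}$ together with $\epsilon\log\sum_i a_i\le\epsilon\log m+\max_i\epsilon\log a_i$ gives $\limsup\le\Lambda+2\delta$. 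Combining the two pieces and letting first $\alpha\to\infty$ and then $\delta\to0$ proves the Laplace bound. Reassembling, $\limsup_{\epsilon\to0}\epsilon\log\E[e^{hZ_\epsilon/\epsilon}]\le\max(\Lambda,-(\gamma-1)M+L)$ for every $M>0$, and letting $M\to\infty$ leaves $\le\Lambda$, matching the lower bound.

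The main obstacle is precisely this upper bound: since $x\mapsto hx$ is unbounded, the overshoot event $\{hZ_\epsilon>M\}$ cannot be discarded for free, and hypothesis~\eqref{eq:varc} with a $\gamma>1$ is exactly what tames it — the strict inequality $\gamma>1$ is what makes the splitting $e^{hZ_\epsilon/\epsilon}=e^{\gamma hZ_\epsilon/\epsilon}\cdot e^{-(\gamma-1)hZ_\epsilon/\epsilon}$ yield an exponentially negligible overshoot contribution once $M$ is large. Putting the lower and upper bounds together shows that the limit exists and equals $\sup_{x\in\mathbb{R}}(xh-I(x))$, which is the assertion of the theorem.
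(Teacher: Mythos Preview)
The paper does not prove this theorem: it is quoted in Appendix~\ref{ldt} as background, with a reference to \cite[Theorem~4.3.1]{DZ}, and is then invoked as a black box in the proof of Corollary~\ref{cor3}. So there is no ``paper's proof'' to compare against.

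That said, your argument is a correct sketch of the standard proof from \cite{DZ}. The lower bound is the routine localisation near a point $x_0$ via the LDP lower bound on open sets. For the upper bound you correctly identify the obstruction (the linear function $x\mapsto hx$ is unbounded above) and handle it by the truncation $\psi_M=hx\wedge M$ together with the splitting $e^{hZ_\epsilon/\epsilon}\le e^{\psi_M(Z_\epsilon)/\epsilon}+e^{-(\gamma-1)M/\epsilon}e^{\gamma hZ_\epsilon/\epsilon}$, which is exactly where the moment hypothesis~\eqref{eq:varc} with $\gamma>1$ enters. Your Laplace bound for the bounded continuous $\psi_M$ --- splitting into the compact sublevel set $K_\alpha=\{I\le\alpha\}$ and its complement, covering $K_\alpha$ by finitely many small balls, and applying the LDP upper bound on the closures --- is the standard compactness argument and is fine; the one point worth being slightly more careful about is that to apply the LDP upper bound off $K_\alpha$ you need a \emph{closed} set, and you do this correctly by passing to $\{I\ge\alpha\}\supseteq K_\alpha^c$. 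Sending $\alpha\to\infty$, $\delta\to0$, and finally $M\to\infty$ gives the matching upper bound.
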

\begin{definition}[see Definition 4.2.10 in \cite{DZ}]\label{exequiv} Let $\{Z_\epsilon\}$ and $\{\tilde{Z}_\epsilon\}$ be two families of real-valued random variables, defined on the same probability space. Then $\{Z_\epsilon\}$ and $\{\tilde{Z}_\epsilon\}$ are called exponentially equivalent if for each $\delta>0,$ 
\begin{equation}
\limsup_{\epsilon\rightarrow 0}\epsilon\log\mathbb{P}(|Z_\epsilon-\tilde{Z}_\epsilon|>\delta)=-\infty.
\end{equation}
\end{definition}
\end{appendix}

\section*{Acknowledgements}
Finally, I sincerely would like to thank Prof. Ashkan Nikeghbali and Prof. Emmanuel Kowalski for their support and guidance during the preparation of this paper. Thanks also to the referee for comments leading to improvements in the presentation of the manuscript.

\bibliographystyle{abbrv}
\bibliography{lit}
\end{document}